\newtheorem{theorem}{Theorem}
\newtheorem{lemma}[theorem]{Lemma}
\newtheorem{proposition}[theorem]{Proposition}
\newtheorem{corollary}[theorem]{Corollary}
\newtheorem{definition}{Definition}
\newtheorem{remark}{Remark}
\newtheorem{example}{Example}
\newcommand{\bdf}{\begin{df} \begin{rm}}
\newcommand{\edf}{\end{rm} \end{df}}
\newenvironment{proof}{{\bf Proof.}}{\hspace*{\fill} \rule{2mm}{2mm} \par \hspace{0.1mm}}
\title{Relations between edge removing and edge subdivision concerning domination number of a graph}
\author{Magdalena Lema\'nska  $^{1}$, Joaqu\'{\i}n Tey $^{2}$,  Rita Zuazua $^{3}$
\\
$^1${\small  Gdansk University of Technology, Poland,  magda\@@mif.pg.gda.pl ,}
\\
$^2${\small Universidad Aut\'onoma Metropolitana, Unidad Iztapalapa, M\'exico, jtey\@@xanum.uam.mx}
\\
$^3${\small Universidad Nacional Aut\'onoma de M\'exico, M\'exico, ritazuazua\@@ciencias.unam.mx}}
\date{}
\begin{document}

\maketitle

\begin{abstract}

Let $e$ be an edge of a connected simple graph $G$. The graph obtained by removing (subdividing) an edge $e$ from $G$ is denoted by $G-e$ $\left( G_{e}\right)$. As usual, $\gamma (G)$ denotes the domination number of $G$.
We call $G$ an SR-graph if $\gamma(G-e)=\gamma(G_{e})$ for any edge $e$ of $G$, and $G$ is an ASR-graph if
$\gamma(G-e)\neq \gamma(G_{e})$ for any edge $e$ of $G$.
In this work we give several examples of SR and ASR-graphs. Also, we characterize SR-trees and show that ASR-graphs are $\gamma$-insensitive.

{\bf Keywords:}  domination number, edge removing, edge subdividing, trees,  $\gamma$-insensitive graph.\\
{\bf \AmS \; Subject Classification:} 05C05, 05C69.
\end{abstract}

\section{Introduction and basic definitions}

Let $G$ be a connected simple graph. We denote by $V(G)$ and $E(G)$ the vertex set and the edge set of $G$, respectively. For a set $X\subseteq V(G),$ $G[X]$ is the subgraph induced by $X$ in $G$. The \emph{neighborhood} $N_{G}(u)$ of a vertex $u$ in $G$ is the set of all vertices adjacent to $u$, its \emph{closed neighborhood} is $N_{G}[u]=N(u)\cup \{u\}$  and \emph{the closed neighborhood of $X\subseteq V(G)$} is $N_{G}[X]=\bigcup\limits_{u\in X}N_{G}[u]$. A vertex $u$ in $G$ is a \emph{universal vertex} if $N_{G}[u]=V(G)$.

The \emph{external private neighborhood} of a vertex $u\in D$ with respect to $D \subseteq V (G)$ is the set $EPN_G(u,D) = N_G(u) - N_G[D - \{u\}]$.

The \emph{degree} of a vertex $u$ is $d_{G}(u)=|N_{G}(u)|$. A vertex $v$ in $G$ is a {\it leaf} if $d_G(v)=1$. A vertex $u$ is called a {\it support vertex} if it is adjacent to a leaf. We denote by
$\mathcal{L}_{G}\left(u\right) $ the set of leaves in $G$ adjacent to $u$. A support vertex $u$ is called a \emph {strong support vertex} if $|\mathcal{L}_{G}\left(u\right)|>1$. In the other case is called a \emph{weak support vertex}. The set of all support vertices of $G$ is denoted by $Supp(G)$.

For $X, Y \subseteq V(G)$, we say \textit{ $X$ dominates $Y$} (abbreviated by $X\succ Y$) if $Y \subseteq N_{G}[X]$.  If $N_{G}[X] = V (G)$, then $X$ is called a \emph{dominating set} of $G$ and we write $X\succ G$. If $Y = \{y\}$, we put $X\succ y$.

The \textit{domination number} of $G$, $\gamma (G)$, is the minimum cardinality among all dominating sets of $G$.  A minimum dominating set of a graph $G$ is called a \textit{$\gamma$-set} of $G$. We denote by $\Gamma (G)$ the set of all $\gamma$-sets of $G$. Let $e\in E\left( G\right) $ and $D\in \Gamma \left( G\right) $. If $\left\vert e\cap D\right\vert =1$, then $\overline{e\cap D}$  denotes the
vertex of $e$ not contained in $D$.

The undefined terms in this work may be found in \cite{b1,b2}.

\vspace{.3cm}

For an edge $e=uv$ of $G$, we consider the following two modifications of $G$.
\begin{itemize}
	\item Removing the edge $e$: we  delete $e$ from $G$ and obtain a new graph, which is denoted by $G-e$.
	\item Subdividing the edge $e$: we delete $e$, add a new vertex $w$ and add two new edges $uw$ and $wv$. The new graph is denoted by $G_{e}$.
\end{itemize}

$G$ is a \emph{$\gamma$-insensitive} graph if  $\gamma\left( G-e\right) =\gamma \left( G\right)$ for any edge $e$ of $G$. An edge $e$ of $G$ is called a $bondage$ $edge$ if $\gamma \left( G-e\right) >\gamma \left( G\right) $. We will use frequently the following characterization of a bondage edge of a graph given by Teschner in \cite{b3}.

\begin{theorem}\cite{b3}\label{teschner} An edge $e$ of a graph $G$ is a
bondage edge if and only if
$$
|e\cap D|=1 \text{ and } \overline{e\cap D}\in EPN(e\cap D,D) \text{ for any } D \text{ in } \Gamma (G).
$$
\end{theorem}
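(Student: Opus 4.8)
The plan is to reduce the statement to a single clean equivalence: a $\gamma$-set $D$ of $G$ remains a dominating set of $G-e$ if and only if the displayed condition $|e\cap D|=1$ and $\overline{e\cap D}\in EPN(e\cap D,D)$ \emph{fails}. Granting this, the theorem follows by a counting argument. First I would record two elementary bounds. Since removing an edge only destroys adjacencies, every dominating set of $G-e$ is also a dominating set of $G$; in particular $\gamma(G)\le\gamma(G-e)$, and if $S$ is any dominating set of $G-e$ with $|S|=\gamma(G)$, then $S$ is forced to lie in $\Gamma(G)$. Consequently $e$ is a bondage edge, i.e.\ $\gamma(G-e)>\gamma(G)$, precisely when \emph{no} $\gamma$-set of $G$ is a dominating set of $G-e$.

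Next I would prove the central equivalence by a case analysis on $|e\cap D|$, writing $e=uv$. The closed neighborhoods that differ between $G$ and $G-e$ are only those of $u$ and $v$, so only the domination of $u$ and $v$ can be affected. If $|e\cap D|=0$, then $u$ and $v$ each retain in $D$ a dominator different from the other endpoint, so $D$ still dominates $G-e$. If $|e\cap D|=2$, then $u,v\in D$ dominate themselves and nothing is lost, so again $D$ dominates $G-e$. In both of these cases the displayed condition is false, as required. The decisive case is $|e\cap D|=1$: say $e\cap D=\{u\}$ and $\overline{e\cap D}=v$. Here $u$ is still dominated by itself, while $v$ loses its adjacency to $u$; since $v\notin D$, we have $v\in N_{G-e}[D]$ exactly when $v$ has a neighbor in $D-\{u\}$, that is, when $v\in N_G[D-\{u\}]$. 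As $v\in N_G(u)$, this says $v\notin EPN(u,D)$. Therefore $D$ fails to dominate $G-e$ if and only if $v\in EPN(e\cap D,D)$, which is precisely the displayed condition.

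Combining the two parts yields the theorem: $e$ is a bondage edge $\iff$ every $D\in\Gamma(G)$ fails to dominate $G-e$ $\iff$ the displayed condition holds for every $D\in\Gamma(G)$. The point needing the most care is the reduction in the first paragraph, namely verifying that a dominating set of $G-e$ of size $\gamma(G)$ must itself be a $\gamma$-set of $G$; this is what allows one to test only the members of $\Gamma(G)$ rather than all dominating sets of $G-e$. Once that is established the case analysis is routine, the only subtlety being the correct translation between ``$v$ has a neighbor in $D-\{u\}$'' and membership in the external private neighborhood $EPN(u,D)=N_G(u)-N_G[D-\{u\}]$.
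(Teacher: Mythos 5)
Your proposal is correct, but there is nothing in the paper to compare it against: Theorem \ref{teschner} is quoted from Teschner's paper \cite{b3} and used as a black box; the authors give no proof of it. So your argument stands on its own, and it stands up well. The reduction in your first paragraph is sound: since $E(G-e)\subseteq E(G)$, any dominating set of $G-e$ dominates $G$, hence a dominating set of $G-e$ of cardinality $\gamma(G)$ lies in $\Gamma(G)$, and therefore $e$ is a bondage edge exactly when no member of $\Gamma(G)$ dominates $G-e$. The case analysis is also correct; in the decisive case $e\cap D=\{u\}$, $v\notin D$, one has $N_{G-e}[v]\cap D=(N_G(v)-\{u\})\cap D$, which is empty precisely when $v\notin N_G[D-\{u\}]$, i.e.\ precisely when $v\in EPN(u,D)$, as you say. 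One small observation: your central equivalence actually holds for an arbitrary dominating set $D$ of $G$, not just for $\gamma$-sets, and this is worth stating explicitly, since the reduction step is what justifies restricting attention to $\Gamma(G)$; you do flag this as the delicate point, correctly. In short: a complete, self-contained proof of a result the paper only cites.
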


If an edge satisfies the above condition, then we say that it satisfies Teschner's Condition.

The relation between $\gamma (G)$ and $\gamma (G-e)$ was studied in several works. For example, in \cite{suffel,Walikar} the authors characterized graphs $G$ such that for every edge $e$ of $G,$ $\gamma(G-e)>\gamma(G)$. The $\gamma$-insensitive graphs were considered in \cite{Dutton,Haynes}.

On the other hand, influence of the subdivision of an edge on the domination number was studied for instance in \cite{haynes1,haynes2}.

In this paper we begin the study of the relation between the domination number of the graphs $G-e$ and $G_e$ for an edge $e$ of $G$. We start with the following remark and examples.

\begin{remark}
\label{obs1}
For any edge $e$ of a graph $G$ we have $$\gamma(G)\leq \gamma(G-e)\leq \gamma(G)+1\hbox{ and }  \gamma(G)\leq \gamma(G_{e})\leq \gamma(G)+1.$$
\end{remark}

As usual, $P_n$ and $K_n$ denote the path and the complete graph  of order $n$, respectively. Let $P_n=(v_1,v_2, \ldots, v_n)$.

\begin{itemize}
\item If $G=P_{6}$, then $2=\gamma(G-e)< \gamma(G_e)=3$ for $e=v_{3}v_{4}$ and  $\gamma(G-e)= \gamma(G_e)= 3$ for $e=v_1v_2$.
\item If  $G=P_{8},$ then $4=\gamma(G-e) > \gamma(G_e)=3$ for  $e=v_{4}v_{5}$ and $\gamma(G-e)=\gamma(G_e)= 3$ for $e=v_3v_4$.
\item If  $G=P_{7},$  then for any edge $e$, $\gamma(G-e)=\gamma(G_{e})=3$.
\item If $G=K_3$, then for any edge $e$, $1=\gamma(G-e) <  \gamma(G_{e})=2$.
\end{itemize}

The above situation motivates the following definition.

\begin{definition} Let $G$ be a  graph of order at least two.
\begin{enumerate}
\item We call $G$ a \emph{sub-removable} graph (shortly, SR-graph) if $\gamma(G-e)=\gamma(G_{e})$ for any edge $e$ of $G$.
\item We call $G$ an \emph{anti-sub-removable} graph (shortly, ASR-graph) if  $\gamma(G-e)\neq \gamma(G_{e})$ for any edge $e$ of $G$.
\end{enumerate}
\end{definition}

\begin{example}
The complete bipartite graph $G=K_{m,n}$, where $\max\{m,n\}>1,$ is an SR-graph. To show this we may suppose, without loss of generality, $m > 1$. Let $e=uv$ be an edge of $G$. If $n=1$, then $G$ is a star and $\gamma(G-e)=\gamma(G_e)=2$. Otherwise  $m,n>1$ and $\gamma(G)=2$. Moreover, $\{u,v\}\succ G-e$, $\{u,v\}\succ G_e$ and by Remark \ref{obs1}, $\gamma(G-e)=\gamma(G_e)=2$.
\end{example}

\begin{example}
The complete graph $G=K_{n}$, where $n\geq 3$, is an ASR-graph, because $\gamma(G-e)=1$ and $\gamma(G_e)=2$ for any edge $e$ of $G$.
\end{example}

Our paper is organized as follows: in Section 2 we give several examples of SR-graphs and show that every graph is an induced subgraph of an SR-graph. In Section 3 we characterize SR-trees and bondage edges in SR-trees. Finally, in Section 4 we characterize ASR-graphs with domination number one, give some properties of ASR-graphs, show that ASR-graphs are $\gamma$-insensitive and give an infinity family of ASR-graphs with arbitrary domination number.
\\

\section{Sub-removable graphs }

In this section we give some infinity families of sub-removable graphs and we show that every graph is an induced subgraph of an $SR$-graph.

\vspace{.3cm}

In the case of the path $P_n$ or the cycle $C_n$ of order $n$ its domination number is well known.

\begin{remark}\cite{b2}\label{dom-path} For $n\geq 1$,  $\gamma(P_n)=\gamma(C_{n})=\lceil \frac{n}{3}\rceil$.
\end{remark}

\begin{proposition}\label{SR-path}
The path $P_{n}$ is a sub-removable graph if and only if $n=3$ or $n\equiv 1 \pmod 3$ for $n\geq 4$.
\end{proposition}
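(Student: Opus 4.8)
The plan is to reduce the SR condition for $P_n$ to a single identity about ceiling functions and then resolve it by a residue analysis modulo $3$.

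First I would record the two relevant computations. Subdividing any edge of $P_n$ produces the path $P_{n+1}$, so by Remark~\ref{dom-path} we have $\gamma((P_n)_e)=\gamma(P_{n+1})=\lceil (n+1)/3\rceil$ for \emph{every} edge $e$, independent of the choice of $e$. Removing the edge $e=v_iv_{i+1}$ (with $1\le i\le n-1$) disconnects $P_n$ into the disjoint union $P_i\cup P_{n-i}$, so, using that $\gamma$ is additive over connected components together with Remark~\ref{dom-path}, $\gamma(P_n-e)=\lceil i/3\rceil+\lceil (n-i)/3\rceil$. Hence $P_n$ is an SR-graph if and only if $\lceil i/3\rceil+\lceil (n-i)/3\rceil=\lceil (n+1)/3\rceil$ holds for all $1\le i\le n-1$.

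Next I would analyze the quantity $f(i):=\lceil i/3\rceil+\lceil (n-i)/3\rceil-\lceil n/3\rceil$. Writing $\delta(m):=\lceil m/3\rceil-m/3\in\{0,\tfrac13,\tfrac23\}$, a direct substitution gives $f(i)=\delta(i)+\delta(n-i)-\delta(n)$, which depends only on the residues of $i$ and $n$ modulo $3$ and takes values in $\{0,1\}$ (the upper bound $1$ follows since $\lceil a/3\rceil\le a/3+\tfrac23$). A short table over the three residues of $i$ then yields: if $n\equiv 1\pmod 3$ then $f(i)=0$ for every $i$; if $n\equiv 2\pmod 3$ then $f(i)=1$ exactly when $i\equiv 1\pmod 3$ and $f(i)=0$ otherwise; and if $n\equiv 0\pmod 3$ then $f(i)=0$ exactly when $i\equiv 0\pmod 3$ and $f(i)=1$ otherwise. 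On the other hand $\lceil (n+1)/3\rceil-\lceil n/3\rceil$ equals $1$ when $n\equiv 0\pmod 3$ and $0$ otherwise. So the SR equation reduces to: $f(i)$ must equal $1$ for all admissible $i$ when $n\equiv 0\pmod 3$, and must equal $0$ for all admissible $i$ otherwise.

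Finally I would match these up, which is where the boundary of the range $1\le i\le n-1$ becomes decisive. For $n\equiv 1\pmod 3$ the required $f(i)=0$ holds for all $i$, so every such $P_n$ with $n\ge 4$ is SR. For $n\equiv 2\pmod 3$ the index $i=1\equiv 1\pmod 3$ always lies in range and gives $f(1)=1\ne 0$, so $P_n$ is not SR. For $n\equiv 0\pmod 3$ we need $f(i)=1$ throughout, i.e.\ no admissible index $i\equiv 0\pmod 3$; such an index exists precisely when $n\ge 6$ (take $i=3$), whereas for $n=3$ the only indices are $i\in\{1,2\}$, both giving $f=1$. Thus among the multiples of $3$ only $P_3$ is an SR-graph, which combines with the previous cases to give exactly the stated characterization.

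I expect the main obstacle to be this last range-checking step rather than the ceiling arithmetic: the residue computation alone would incorrectly group $P_3$ with the larger multiples of $3$, and the whole distinction hinges on whether an index $i\equiv 0\pmod 3$ actually occurs in $\{1,\dots,n-1\}$. I would therefore treat $n=3$ explicitly and verify carefully that $i=3$ is available exactly for $n\ge 6$.
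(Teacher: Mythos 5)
Your proof is correct and takes essentially the same approach as the paper: both reduce the SR condition to the ceiling identity $\lceil i/3\rceil+\lceil (n-i)/3\rceil=\lceil (n+1)/3\rceil$ using Remark~\ref{dom-path} together with the decompositions $(P_n)_e=P_{n+1}$ and $P_n-e=P_i\cup P_{n-i}$, and then settle it by residues modulo $3$. The only difference is organizational: the paper treats $n=3$ separately and exhibits witness edges ($v_3v_4$ when $n\equiv 0$, $v_1v_2$ when $n\equiv 2$) in the non-SR cases, whereas you verify the full biconditional uniformly through the function $f(i)$ and the availability of indices in $\{1,\dots,n-1\}$.
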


\begin{proof} It is clear that the path $P_{2}=(v_1,v_2)$ is an anti-sub-removable graph. If $n=3,$ for any edge $e$ of $G=P_{3}$, $\gamma(G_{e})=\gamma(P_4)=2=\gamma(G-e)$.
Thus $P_3$ is an SR -graph.

\vspace{.3cm}

For $n\geq 4$, let $G=P_n=(v_1, ... ,v_n)$. We consider the next three cases.
\vspace{.3cm}

\textbf{Case 1.} If $n\equiv 0 \pmod 3,$ by Remark \ref{dom-path}, $\gamma(G_{e})=\gamma (P_{n+1})=\gamma(P_n)+1$ for any edge $e$. But, for $e=v_3v_4$, $\gamma(G-e)=\gamma(G),$ so $G$ is not an SR-graph.

\textbf{Case 2.} If $n\equiv 1 \pmod 3$, for any edge $e$,  $G-e= P_s \cup P_t$ where $s+t=n$. As $\gamma(G-e)=\gamma(P_s) + \gamma (P_t)$, by Remark \ref{dom-path}, $\gamma (G-e) = \lceil \frac{s}{3} \rceil +\lceil \frac{t}{3} \rceil = \lceil \frac{n}{3} \rceil= \lceil \frac{n+1}{3} \rceil = \gamma(G_e)$ and $G$ is an SR-graph.

\textbf{Case 3.} If $n\equiv 2 \pmod 3,$ by Remark \ref{dom-path}, $\gamma(G_e)=\gamma (P_{n+1})=\gamma(G)$ for any edge $e$. But, for $e=v_1v_2,$ $\gamma (G-e)=\gamma (P_{n-1}) + 1= \gamma (G)+1$, so $G$ is not an  SR-graph.

\end{proof}

\begin{proposition} Let $n \geq 3$. If $n\equiv 1,2 \pmod 3$, then the cycle $C_n$ is an SR-graph. Otherwise, is an ASR-graph.
\end{proposition}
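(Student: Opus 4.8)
The plan is to reduce the entire statement to a single elementary comparison of two ceiling functions, exploiting the high symmetry of the cycle. The key structural observation is that $C_n$ is edge-transitive: any two edges are interchangeable by an automorphism of the cycle. Consequently both quantities $\gamma(C_n-e)$ and $\gamma\left((C_n)_e\right)$ are independent of the chosen edge $e$, and it suffices to analyze a single fixed edge and conclude for all of them simultaneously. This immediately dispenses with the universal quantifier over edges that appears in the definitions of SR- and ASR-graphs.

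Next I would identify the two modified graphs explicitly. Removing any edge from $C_n$ breaks the unique cycle and leaves a spanning path, so $C_n-e\cong P_n$. Subdividing any edge inserts exactly one new vertex into the cycle, so $(C_n)_e\cong C_{n+1}$. Feeding these two isomorphisms into Remark \ref{dom-path} gives, for every edge $e$,
$$\gamma(C_n-e)=\gamma(P_n)=\left\lceil \tfrac{n}{3}\right\rceil \quad\text{and}\quad \gamma\left((C_n)_e\right)=\gamma(C_{n+1})=\left\lceil \tfrac{n+1}{3}\right\rceil.$$
At this point the graph theory is finished and only arithmetic remains.

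The final step is the case analysis on $n \bmod 3$. If $n\equiv 1\pmod 3$, write $n=3k+1$; then $\lceil n/3\rceil=k+1=\lceil (n+1)/3\rceil$. If $n\equiv 2\pmod 3$, write $n=3k+2$; then again both ceilings equal $k+1$. In either of these cases $\gamma(C_n-e)=\gamma\left((C_n)_e\right)$ for every edge $e$, so $C_n$ is an SR-graph. If instead $n\equiv 0\pmod 3$, write $n=3k$; then $\lceil n/3\rceil=k$ while $\lceil (n+1)/3\rceil=k+1$, so the two values differ for every edge $e$ and $C_n$ is an ASR-graph.

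There is essentially no serious obstacle in this argument; the only point requiring a little care is the edge-transitivity reduction, which is what lets a single generic computation settle the \emph{for every edge} condition in both definitions at once. Everything else is the identification $C_n-e\cong P_n$, $(C_n)_e\cong C_{n+1}$ together with the known domination numbers of paths and cycles from Remark \ref{dom-path}.
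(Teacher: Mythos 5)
Your proof is correct and follows essentially the same route as the paper: both identify $C_n-e\cong P_n$ and $(C_n)_e\cong C_{n+1}$ for every edge $e$ (the paper treats this as immediate from the cycle's symmetry, which you make explicit via edge-transitivity) and then compare $\lceil n/3\rceil$ with $\lceil (n+1)/3\rceil$ by cases on $n \bmod 3$. No substantive difference; your argument is just a slightly more spelled-out version of the paper's.
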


\begin{proof}
If $G=C_n$, then for any edge $e\in E(G)$, $G-e=P_n$ and $G_{e}=C_{n+1}$.  If $n\equiv 1,2 \pmod 3$, by Remark \ref{dom-path},
$\gamma (G-e)=\gamma (P_n)= \lceil \frac{n}{3} \rceil = \lceil \frac{n+1}{3} \rceil= \gamma (C_{n+1})=\gamma (G_e)$.

In the other case, $n\equiv 0 \pmod 3$ and $\gamma (G-e)=\gamma (P_n)= \lceil \frac{n}{3} \rceil < \lceil \frac{n+1}{3} \rceil= \gamma (C_{n+1})=\gamma (G_e)$.
\end{proof}

Recall that we denote by $\Gamma (G)$ the set of all $\gamma$-sets of a graph $G$ and for a support vertex $u$,
$\mathcal{L}_{G}\left(u\right) $ is the set of leaves adjacent to $u$ in $G$.

\begin{remark}\label{Soporte fuerte}
If $u$ is a strong support vertex of a graph $G$, then for
any $D$  in $\Gamma (G)$, $u\in D$ and $\mathcal{L}_{G}\left( u\right) \cap D=\emptyset $.
\end{remark}

\begin{lemma}\label{SR fuerte}
Let $G$ be a graph and $e=uv$ be an edge of $G$ where
$v\in \mathcal{L}_{G}\left( u\right) $. If $u$ is a strong support vertex of
$G$, then $\gamma (G-e)=\gamma (G)+1=\gamma (G_{e})$.
\end{lemma}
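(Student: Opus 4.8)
The plan is to treat the two equalities $\gamma(G-e)=\gamma(G)+1$ and $\gamma(G_e)=\gamma(G)+1$ separately, and in each case to exploit Remark~\ref{obs1}, which already supplies the upper bounds $\gamma(G-e)\le\gamma(G)+1$ and $\gamma(G_e)\le\gamma(G)+1$. Thus the real content lies in the matching lower bounds. The two facts I will lean on throughout are: (i) by Remark~\ref{Soporte fuerte}, every $D\in\Gamma(G)$ contains $u$ and misses every leaf in $\mathcal{L}_G(u)$, in particular $v\notin D$; and (ii) since $u$ is a strong support vertex it has a second leaf $v'\neq v$, so $u$ remains a support vertex after $v$ is deleted or after $e$ is subdivided.

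For the edge-removal equality, I would first observe that deleting $e=uv$ leaves $v$ isolated, so every dominating set of $G-e$ must contain $v$; hence $\gamma(G-e)=1+\gamma(H)$, where $H$ is the graph obtained from $G$ by deleting the vertex $v$ (that is, $G-e$ is $H$ together with the isolated vertex $v$). It then suffices to show $\gamma(H)=\gamma(G)$. For $\gamma(H)\le\gamma(G)$, take any $D\in\Gamma(G)$; by (i) we have $v\notin D$, so $D\subseteq V(H)$, and since deleting $v$ destroys no adjacency between two vertices of $V(H)$, $D$ still dominates $H$. For $\gamma(H)\ge\gamma(G)$, take a $\gamma$-set $D'$ of $H$; by (ii), $u$ is a support vertex of $H$ (its leaf $v'$ survives), so after possibly swapping $v'$ for $u$ I may assume $u\in D'$, and then $D'$ dominates all of $G$ because $v$ is adjacent to $u\in D'$. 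This yields $\gamma(G-e)=1+\gamma(G)$.

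For the subdivision equality, the upper bound $\gamma(G_e)\le\gamma(G)+1$ follows by taking $D\in\Gamma(G)$ (so $u\in D$) and checking that $D\cup\{v\}$ dominates $G_e$: the new vertex $w$ is adjacent to $u$, and $v$ dominates itself. For the lower bound $\gamma(G_e)\ge\gamma(G)+1$, I would start from an arbitrary $\gamma$-set $D'$ of $G_e$ and normalize it so that $w\in D'$ and $v\notin D'$ (choose a $\gamma$-set containing the support vertex $w$, and note that its leaf $v$ is then redundant). I would further arrange that $u\in D'$; the set $S=D'\setminus\{w\}$ then lies in $V(G)$, has size $\gamma(G_e)-1$, and dominates $G$: every vertex of $G$ other than $u,v$ keeps its $G_e$-neighborhood and so is dominated by a vertex of $S$ (only $u,v,w$ could have relied on $w$), while $u\in S$ dominates both $u$ and its leaf $v$. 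Hence $\gamma(G)\le\gamma(G_e)-1$, and with the upper bound $\gamma(G_e)=\gamma(G)+1$.

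The main obstacle is the normalization step in the subdivision lower bound, namely forcing $u\in D'$ while keeping $w\in D'$ and $v\notin D'$. If $|\mathcal{L}_G(u)|\ge 3$, then $u$ is still a strong support vertex of $G_e$ and Remark~\ref{Soporte fuerte} applied to $G_e$ hands me $u\in D'$ for free. The delicate case is $|\mathcal{L}_G(u)|=2$, where in $G_e$ the vertex $u$ retains only the single leaf $v'$ and may be only a weak support; there I would argue that if $u\notin D'$, then its surviving leaf $v'$ must lie in $D'$, and replacing $v'$ by $u$ keeps $D'$ a $\gamma$-set with $u,w\in D'$ and $v\notin D'$, reducing to the favourable case. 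Once this bookkeeping is in place, the deletion of $w$ and the verification that $S$ dominates $G$ are routine.
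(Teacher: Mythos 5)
Your proof is correct, but it follows a genuinely different route from the paper's, especially for the first equality. For $\gamma(G-e)=\gamma(G)+1$ the paper argues in two lines via Teschner's characterization of bondage edges (Theorem \ref{teschner}): by Remark \ref{Soporte fuerte}, every $D\in\Gamma(G)$ satisfies $e\cap D=\{u\}$ and $v\in EPN(u,D)$, so $e$ is a bondage edge. You avoid Teschner's theorem entirely, decomposing $G-e$ as the isolated vertex $v$ plus $H=G-v$ and proving $\gamma(H)=\gamma(G)$ by two exchange arguments; this is elementary and self-contained, at the cost of length --- in effect you re-derive exactly the instance of Teschner's condition that is needed. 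For $\gamma(G_e)=\gamma(G)+1$ the two proofs share the same core idea (a dominating set of $G_e$ of size $\gamma(G)$ would yield a dominating set of $G$ of size less than $\gamma(G)$), but the paper organizes it more economically: taking $S=\{u,v,w,v'\}$ with $v'$ a second leaf of $u$, the leaves $v$ and $v'$ of $G_e$ hang from the distinct support vertices $w$ and $u$, so any dominating set $D$ of $G_e$ meets $S$ in at least two vertices, and then $(D-S)\cup\{u\}$ dominates $G$ and is strictly smaller than $D$, a contradiction --- no normalization of $D$ is needed. Your version instead normalizes a $\gamma$-set of $G_e$ to contain $w$ and $u$ but not $v$ (with the case split on $|\mathcal{L}_G(u)|$ that you rightly flag as the delicate point) and then deletes $w$; that bookkeeping is sound, but it is precisely what the paper's counting trick sidesteps. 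Net effect: your argument is independent of the bondage-edge machinery, while the paper's is shorter and reuses Theorem \ref{teschner}, which recurs throughout the paper.
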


\begin{proof} Let $D$ in $\Gamma (G)$. By Remark \ref{Soporte fuerte}, $e\cap D=u$, and $v\in EPN\left( u,D\right) $.
Therefore $e$ satisfies
Teschner's Condition and by Theorem \ref{teschner}, $e$ is a bondage edge, i.e., $\gamma
(G-e)=\gamma (G)+1$.

Let $D$ be a $\gamma$-set of $G_e$, assume $|D|=\gamma (G)$. Let $S=\{ u,v,w,v'\}$ where $w$ is the new vertex in $G_{e}$ and $v' \in \mathcal{L}_{G}\left( u\right)$. As $v, v'$ are leaves in $G_e$ adjacent to different support vertices,  $|D\cap S|=2$. But $(D-S)\cup \{ u\}$ is a dominating set of $G$ with $|(D-S)\cup \{ u\}|<|D|$, a contradiction.

\end{proof}

\begin{remark}\label{support-vertices}
Let $G$ be a graph and $e=uv$ be an edge of $G$ where $\{u,v\}\subseteq Supp(G)$. Then $\gamma (G-e)=\gamma (G)=\gamma (G_{e})$.
\end{remark}

\begin{definition} A graph $G$ is called a \emph{hairy graph} if every vertex of $G$ is a leaf or a support vertex.
\end{definition}
Examples of hairy graphs are stars, caterpillars and the corona $G\circ K_1$ of any graph $G$.
\begin{remark}\label{SopDomHairy}
Let $G$ be a hairy graph different from $K_2$. Then $Supp(G)$ is a minimum dominating set of $G$.
\end{remark}

\begin{theorem}\label{hairy}
If $G$ is a hairy graph with at least three vertices, then $G$ is an  SR-graph.
\end{theorem}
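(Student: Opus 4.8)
The plan is to prove the result by a case analysis on each edge $e=uv$ of $G$ according to the types of its two endpoints. Since $G$ is hairy, every vertex is a leaf or a support vertex, and since $G$ is connected with at least three vertices it has no edge joining two leaves (such an edge would be an isolated $K_2$). Hence every edge falls into exactly one of three types: (i) both endpoints are support vertices; (ii) one endpoint is a strong support vertex and the other is one of its leaves; (iii) one endpoint $u$ is a weak support vertex and the other endpoint $v$ is its unique leaf. (If $v$ is a leaf of the edge $uv$, then $u$ is necessarily its support vertex, so $v\in\mathcal{L}_G(u)$.) For type (i), Remark~\ref{support-vertices} gives $\gamma(G-e)=\gamma(G)=\gamma(G_e)$, and for type (ii), Lemma~\ref{SR fuerte} gives $\gamma(G-e)=\gamma(G)+1=\gamma(G_e)$. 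In both cases $\gamma(G-e)=\gamma(G_e)$, so these edges cause no difficulty.

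The substantive part is type (iii), which I expect to be the main obstacle. Here I would first invoke Remark~\ref{SopDomHairy}: since $G$ is hairy and $G\neq K_2$, the set $S=Supp(G)$ is a $\gamma$-set, so $\gamma(G)=|S|$. Then I would record two structural facts that follow from $u$ being a weak support vertex in a connected hairy graph with at least three vertices. First, $u$ has degree at least two (otherwise $\{u,v\}$ would be a $K_2$-component), so it has a neighbor other than $v$, and every such neighbor is a support vertex, since it cannot be a second leaf of $u$. Second, every leaf of $G$ different from $v$ has a support vertex different from $u$, because $u$'s only leaf is $v$.

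With these facts in hand, the idea is to exhibit dominating sets of size $|S|=\gamma(G)$ in both modified graphs. In $G-e$ the vertex $v$ becomes isolated, so I would take $D_1=(S\setminus\{u\})\cup\{v\}$: it dominates $v$ by itself, dominates $u$ through a support-vertex neighbor of $u$ still present in $G-e$, dominates every other support vertex trivially, and dominates every remaining leaf through its own support vertex, which lies in $S\setminus\{u\}$. For $G_e$, writing $w$ for the subdivision vertex, I would take $D_2=(S\setminus\{u\})\cup\{w\}$: the vertex $w$ dominates $u$, $v$ and itself, and the rest of $V(G)$ is covered exactly as before. Thus $\gamma(G-e)\le|S|=\gamma(G)$ and $\gamma(G_e)\le|S|=\gamma(G)$.

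To finish, I would apply Remark~\ref{obs1} for the reverse inequalities $\gamma(G-e)\ge\gamma(G)$ and $\gamma(G_e)\ge\gamma(G)$. Combining, $\gamma(G-e)=\gamma(G)=\gamma(G_e)$ in type (iii) as well, so $\gamma(G-e)=\gamma(G_e)$ holds for every edge and $G$ is an SR-graph. The only delicate point is checking that $D_1$ and $D_2$ genuinely dominate every vertex, which reduces entirely to the two structural observations about the weak support vertex $u$; once those are secured, the rest is immediate.
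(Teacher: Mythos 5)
Your proof is correct and follows essentially the same route as the paper: both split edges into support--support (handled by Remark~\ref{support-vertices}), strong-support--leaf (handled by Lemma~\ref{SR fuerte}), and weak-support--leaf, where a dominating set of size $\gamma(G)$ is built from $Supp(G)$ via Remark~\ref{SopDomHairy} and the lower bound comes from Remark~\ref{obs1}. Your only deviation is cosmetic: you use two sets, $(S\setminus\{u\})\cup\{v\}$ for $G-e$ and $(S\setminus\{u\})\cup\{w\}$ for $G_e$, where the paper uses the single set $(Supp(G)-\{u\})\cup\{v\}$ for both graphs, and you also spell out the structural facts about weak support vertices that the paper leaves implicit.
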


\begin{proof}
If $\gamma (G)=1$, then $G$ is a star and it is an SR-graph. Suppose $\gamma (G)\geq 2$.


By Remark $5$, $D=Supp(G)$ is a $\gamma $-set of $G$. Let $e=uv \in E(G)$. If $u,v\in D$, by Remark \ref{support-vertices}, $\gamma (G-e)=\gamma (G)=\gamma (G_{e})$.

Otherwise, we may suppose $u\in D$ and $v\in \mathcal{L}_{G}\left( u\right) $. As $G$ is
a connected graph and $\gamma (G)\geq 2$, $u$ is dominated by some vertex in $D$.

If $u$ is a weak support vertex, $D^{\prime }=(D-\{u\})\cup \{v\}$ is a dominating set
of $G-e$ and $G_{e}$ with $\left\vert D^{\prime }\right\vert =\left\vert
D\right\vert $, so by Remark \ref{obs1} $G$ is an SR-graph. In the other case, $u$ is a strong support vertex and by Lemma \ref{SR fuerte}, $\gamma
(G-e)=\gamma (G)+1=\gamma (G_{e})$ and $G$ is an SR-graph.
\end{proof}

\begin{corollary}\label{induced-hairy}
Every graph is an induced subgraph of an SR-graph.
\end{corollary}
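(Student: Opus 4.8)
The plan is to realize an arbitrary graph $H$ as an induced subgraph of a hairy graph and then invoke Theorem~\ref{hairy}. The natural candidate is the corona $G = H \circ K_1$, obtained by attaching one new pendant vertex to each vertex of $H$. Every original vertex of $H$ then becomes a support vertex, since it is adjacent to its newly attached leaf, and every added vertex is itself a leaf; hence $G$ is hairy by definition. Moreover $H$ is recovered as $G[V(H)]$: attaching pendant vertices creates no edges among the vertices of $H$ and deletes none, so the subgraph of $G$ induced by $V(H)$ is exactly $H$. This is the whole idea, and the remaining work is purely bookkeeping about the number of vertices.

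First I would dispose of the order requirement in Theorem~\ref{hairy}. If $H$ has at least two vertices, then $G = H \circ K_1$ has at least four vertices, so Theorem~\ref{hairy} applies directly: $G$ is an SR-graph containing $H$ as an induced subgraph, as required. The only exceptional case is $H = K_1$, for which $H \circ K_1 = K_2$, a graph explicitly excluded by the hypothesis of Theorem~\ref{hairy} (indeed $K_2 = P_2$ is an ASR-graph by Proposition~\ref{SR-path}, so the corona construction genuinely fails here). I would therefore handle this last case by hand: $K_1$ is an induced subgraph of the path $P_3$, since any single vertex of $P_3$ induces $K_1$, and $P_3$ is an SR-graph by Proposition~\ref{SR-path} (equivalently, $P_3 = K_{1,2}$ is a hairy graph on three vertices, so Theorem~\ref{hairy} again applies). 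This settles every case.

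The step I expect to require the most attention is not conceptual but the explicit verification that $G[V(H)] = H$ and that $G$ is genuinely hairy; both facts are routine, yet they must be recorded to make the reduction to Theorem~\ref{hairy} rigorous. The only real subtlety is recalling that Theorem~\ref{hairy} excludes $K_2$, which is precisely what forces the separate treatment of the single-vertex graph $H = K_1$.
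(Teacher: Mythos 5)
Your proposal is correct and follows essentially the same route as the paper: form the corona $H \circ K_1$, observe it is hairy with $H$ induced on the original vertices, and apply Theorem~\ref{hairy}, treating the smallest graph(s) separately. In fact your case analysis is slightly sharper than the paper's, which sets aside both $K_1$ and $K_2$ as ``clear,'' whereas you correctly note that only $H = K_1$ genuinely escapes the corona construction (since $K_1 \circ K_1 = K_2$ is excluded by Theorem~\ref{hairy}), while $K_2 \circ K_1 = P_4$ is already covered.
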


\begin{proof}
Let $G$ be a graph. For $G=K_1$ or $G=K_2$ the result is clear. In the other case, by Theorem \ref{hairy}, the corona of $G$, $H=G\circ K_1$ is an SR-graph where $H[Supp(H)]=G$.
\end{proof}

\begin{definition} Let $H_1$ and $H_2$  be hairy graphs and let $u\in Supp(H_1)$ and $v\in Supp(H_2)$. For $t\geq 1$ we define a new graph $G_t(H_1,H_2)$ such that

\begin{itemize}
	\item $V(G_t(H_1,H_2))=V(H_1)\cup V(H_2)\cup \{ x_1,x_2,...,x_t\}$; and
	\item $E(G_t(H_1,H_2))= E(H_1)\cup E(H_2)\cup B$, where $B=\{ ux_1,x_1x_2,...,x_{t-1}x_t, x_tv\}$.
\end{itemize}

\end{definition}

The next theorem shows us a way to construct infinite many SR-graphs from two arbitrary hairy graphs.

\begin{theorem} Let $H_1$ and $H_2$  be hairy graphs with $\gamma (H_1)\geq 2$ and $\gamma (H_2)\geq 2$.
If $t=1$ or $t\equiv 0 \pmod 3$, then the graph  $G_t(H_1,H_2)$ is an SR-graph.
\end{theorem}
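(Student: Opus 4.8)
The plan is to partition $E(G_t(H_1,H_2))$ into three classes --- the edges internal to $H_1$, the edges internal to $H_2$, and the bridge edges in $B$ --- and to verify $\gamma(G_t-e)=\gamma((G_t)_e)$ on each class; by the symmetry between $H_1$ and $H_2$ it is enough to treat the $H_1$-edges and the bridge edges. Everything rests on one structural computation, which I would establish first. Writing $\gamma_1=\gamma(H_1)$, $\gamma_2=\gamma(H_2)$, and letting $A_i$ (resp.\ $B_j$) be the graph obtained from $H_1$ (resp.\ $H_2$) by joining one end of a new path on $i$ (resp.\ $j$) vertices to $u$ (resp.\ $v$), I would prove
$$\gamma(A_i) = \gamma_1 + \left\lceil \frac{i-1}{3} \right\rceil, \qquad \gamma(B_j) = \gamma_2 + \left\lceil \frac{j-1}{3} \right\rceil, \qquad \gamma(G_t) = \gamma_1 + \gamma_2 + \left\lceil \frac{t-2}{3} \right\rceil.$$
Since $\gamma(H_i)\ge 2$, each $H_i$ differs from $K_2$, so by Remark~\ref{SopDomHairy} the set $Supp(H_1)\cup Supp(H_2)$ is a union of $\gamma$-sets that also dominates $x_1$ (via $u$) and $x_t$ (via $v$); adjoining every third interior path-vertex greedily gives the upper bounds.

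For the bridge edges I would exploit the clean observation that subdividing any edge of $B$ merely lengthens the connecting path, so $(G_t)_e=G_{t+1}(H_1,H_2)$ and hence $\gamma((G_t)_e)=\gamma(G_{t+1})$. Deleting a bridge edge instead disconnects $G_t$: removing $ux_1$ (resp.\ $x_tv$) gives $H_1\cup B_t$ (resp.\ $A_t\cup H_2$), while removing $x_ix_{i+1}$ gives $A_i\cup B_{t-i}$, so $\gamma(G_t-e)=\gamma(A_i)+\gamma(B_{t-i})$. Substituting the formulas reduces the SR-identity for interior bridge edges to the purely arithmetic claim
$$\left\lceil \frac{i-1}{3} \right\rceil + \left\lceil \frac{t-i-1}{3} \right\rceil = \left\lceil \frac{t-1}{3} \right\rceil \quad \text{for all } 1 \le i \le t-1,$$
(the end-edges $ux_1,x_tv$ satisfy the corresponding identity for \emph{every} $t$). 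A short residue analysis shows this holds for all admissible $i$ exactly when $t\equiv 0 \pmod 3$, and it holds vacuously when $t=1$ since then there are no interior bridge edges; this is precisely where the hypothesis on $t$ is forced.

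For the $H_1$-edges I would split by endpoint type. If both ends lie in $Supp(H_1)\subseteq Supp(G_t)$, Remark~\ref{support-vertices} gives the equality; if $e=ab$ with $a$ a strong support vertex and $b$ a leaf, then $a$ remains strong in $G_t$ and Lemma~\ref{SR fuerte} applies. The remaining case is $a$ a weak support vertex with unique leaf $b$. When $a\neq u$, the swap $D'=(D-\{a\})\cup\{b\}$ from the proof of Theorem~\ref{hairy} works verbatim for the $\gamma$-set $D=Supp(H_1)\cup Supp(H_2)\cup S_{\text{path}}$, because deleting $a$ does not disturb domination of the path. The delicate subcase is $a=u$: removing $u$ from $D$ would orphan $x_1$. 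Here I would instead produce a $\gamma$-set of $G_t$ that already contains $b$ and dominates $x_1$ from a path-vertex (or, when $t=1$, from $v$); this is available precisely because for $t\equiv 0\pmod 3$ and for $t=1$ one has $\lceil (t-1)/3\rceil=\lceil (t-2)/3\rceil$, so dominating $x_1,\dots,x_{t-1}$ by path-vertices costs no more than dominating $x_2,\dots,x_{t-1}$. Such a set dominates both $G_t-e$ and $(G_t)_e$ without increasing cardinality, whence $\gamma(G_t-e)=\gamma(G_t)=\gamma((G_t)_e)$ by Remark~\ref{obs1}.

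The principal difficulties are the lower bounds in the three displayed domination formulas --- these require a private-neighbour/counting argument showing that ``short-cutting'' $u$ by placing $x_1\in D$ (and dominating $V(H_1)-\{u\}$ with one fewer vertex) never yields a net saving --- together with the bookkeeping in the weak-support subcase $a=u$. Both of these are exactly the points at which the congruence $t\equiv 0\pmod 3$ (or $t=1$) is used, so I expect the lower-bound verification and the $a=u$ case to be the main obstacles, the rest being routine application of Remarks~\ref{support-vertices}, \ref{SopDomHairy} and Lemma~\ref{SR fuerte}.
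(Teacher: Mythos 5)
Your plan is sound and all of its claims check out, but it is a genuinely different proof from the paper's, so a comparison is in order. The paper never computes the domination numbers of the pieces: after the same reduction you make (Lemma \ref{SR fuerte} for strong-support/leaf edges, Remark \ref{support-vertices} for support/support edges, a swap for weak-support/leaf edges), it handles the bridge edges by exhibiting explicit dominating sets subcase by subcase --- for $e\in\{ux_1,x_tv\}$ and for $e=x_1x_2,\,x_{t-1}x_t$ it takes $D_1\cup D_2\cup X$ with $X$ a suitably placed $\gamma$-set of the connecting path, and for interior edges it invokes Proposition \ref{SR-path}, since the inner path $P=(x_2,\dots,x_{t-1})$ has $t-2\equiv 1\pmod 3$ and is therefore itself an SR-graph. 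You instead prove closed formulas $\gamma(A_i)=\gamma_1+\lceil (i-1)/3\rceil$, $\gamma(B_j)=\gamma_2+\lceil (j-1)/3\rceil$, $\gamma(G_t)=\gamma_1+\gamma_2+\lceil (t-2)/3\rceil$, observe that subdividing a bridge edge yields $G_{t+1}(H_1,H_2)$ while deleting one yields $A_i\cup B_{t-i}$, and reduce the whole bridge case to a ceiling identity that holds precisely when $t\equiv 0\pmod 3$ (vacuously when $t=1$). Your route buys two things: it makes the role of the hypothesis on $t$ transparent (the identity genuinely fails for $t\equiv 1,2\pmod 3$, $t\geq 2$), and it explicitly repairs a point the paper glosses over --- the paper asserts the swap $(D-\{y\})\cup\{z\}$ works for \emph{any} $\gamma$-set $D\supseteq D_1\cup D_2$, but when the weak support $y$ equals $u$ this can orphan $x_1$; your choice of a $\gamma$-set whose path part already dominates $x_1$ (available exactly because $\lceil (t-1)/3\rceil=\lceil (t-2)/3\rceil$ under the hypothesis) closes that gap. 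What the paper's route buys is brevity: by working only with upper bounds (explicit dominating sets) plus Remark \ref{obs1}, it avoids proving any lower bounds. Finally, the lower bounds you flag as the principal difficulty are easier than you fear: for distinct support vertices $s$ of $H_i$ the sets $\{s\}\cup\mathcal{L}_{H_i}(s)$ are pairwise disjoint subsets of $V(H_i)$ each of which must meet every dominating set, giving $|D\cap V(H_i)|\geq\gamma_i$ by Remark \ref{SopDomHairy}; and the interior path vertices can be dominated only by path vertices, each covering at most three of them. Summing these disjoint contributions yields all three displayed formulas, so your proposal completes into a full proof.
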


\begin{proof} By Lemma \ref{SR fuerte} and  Remark \ref{support-vertices}, we only need to analyze edges in the set $B$ or edges of the form $e=yz$ where $y$ is a weak support vertex and $z$ is a leaf. Let $D_1=Supp(H_1)$ and $D_2=Supp(H_2)$. By Remark \ref{SopDomHairy}, $D_1$ and $D_2$ are $\gamma$-sets of $H_1$ and $H_2$, respectively. Moreover,  $\gamma (G_{t}(H_1,H_2))=\gamma (H_{1})+\gamma (H_{2})+\gamma (P)$ where $P$ is the path $P=(x_2, ..., x_{t-1})$.

\

Let $D$ be a $\gamma $-set of $G_t(H_1,H_2)$ such that $D_1\cup D_2\subseteq D$. If $e=yz$ where $y$ is a weak support vertex and $z$ is a leaf, then $(D-\{y\})\cup \{z\}$ is a $\gamma$-set of $(G_t(H_1,H_2)-e)$ and $(G_t(H_1,H_2)_e)$.

\

In the rest of the proof we consider edges $e\in B=\{ ux_1,x_1x_2,...,x_{t-1}x_t, x_tv\}$.

\vspace{.3cm}

\textbf{Case 1. }If $t=1$, the edge $e\in \{ ux_1,x_1v\}$. Then $D=D_1\cup D_2$ is a $\gamma$-set of
$(G_t(H_1,H_2)-e)$ and $(G_t(H_1,H_2)_e)$.

\

\textbf{Case 2.} Let $t=3s$, $s\geq 1$. We have the following cases:

\begin{itemize}

\item If $e\in \{ ux_1,x_tv\},$ then for a $\gamma$-set $X$ of the path $(x_1,x_2,...x_t)$, the set $D=D_1\cup D_2\cup X$ is a $\gamma$-set of $G_t(H_1,H_2)$, $(G_t(H_1,H_2)-e)$ and $(G_t(H_1,H_2)_e)$.
\item If $e=x_1x_2$, then for a $\gamma$-set $X$ of $P$ such that $x_2\in X$, the set $D=D_1\cup D_2\cup X$ is a $\gamma$-set of $G_t(H_1,H_2)$, $(G_t(H_1,H_2)-e)$ and $(G_t(H_1,H_2)_e)$. Similarly, for the case of $e=x_{t-1}x_t$ consider a $\gamma$-set of $P$ such that $x_{t-1}\in X.$
\item If $e\in E(P)$, by Proposition $2$, $P$ is an SR-graph with $\gamma (P)= \gamma (P-e)=\gamma (P_e)$, which implies that $\gamma (G_t(H_1,H_2)-e)=\gamma ((G_t(H_1,H_2)_e)$.
\end{itemize}

\end{proof}

\section{Sub-removable trees}

In this section we give a characterization of trees which are SR-graphs. Those trees are called \emph{SR-trees}. Also, we give a characterization of bondage edges in SR-trees.

\begin{remark}
By Theorem \ref{hairy}, if a tree $T$ with at least three vertices  has diameter less or equal to three, then $T$ is an SR-graph.
\end{remark}

\begin{definition} Let $T$ be a tree and $e\in E\left( T\right) $.
\begin{enumerate}
\item The edge $e$ is a \textit{weak edge} of $T$ if $e\cap D=\emptyset $ for any $D$ in $\Gamma \left( T\right) $.
\item The edge  $e$ is a \textit{strong edge} of $T$ if $e$ satisfies Teschner's
Condition and there exists $D\in \Gamma \left( T\right) $ such
that $EPN\left( e\cap D , D \right) =\left\{ \overline{e\cap D}\right\}$.
\end{enumerate}
\end{definition}

\begin{remark}\label{Two in EPN}
If $e$ is a bondage edge and is not a strong edge of a tree $T$, then
$$
\left( N_T\left( e\cap D\right) -\left\{ \overline{e\cap D}\right\} \right)
\cap EPN\left( e\cap D,D\right) \neq \emptyset \text{ for any }D\text{ in }%
\Gamma \left( T\right).
$$
\end{remark}

\begin{remark}\label{dominantes}
Let $D$ be a dominating set of a graph $G$. If $e\in E(G)$ such that  $e\cap
D=\emptyset $, then $D\succ G-e$.
\end{remark}

In the next discussion, given a tree $T$ and an edge $e=uv$ of $T$, $T_{u}$
and $T_{v}$ denote the subtrees of $T-e$ which contain $u$ and $v$,
respectively.

\begin{theorem}\label{SR-treeCaracterizacion}
A tree $T$ is an SR-tree if and only if $T$ does not
contain neither weak nor strong edges.
\end{theorem}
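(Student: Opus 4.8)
The plan is to treat the two quantities $\gamma(T-e)$ and $\gamma(T_e)$, which by Remark~\ref{obs1} each lie in $\{\gamma(T),\gamma(T)+1\}$, so that an edge $e$ violates the SR-equality precisely when one of them equals $\gamma(T)$ and the other $\gamma(T)+1$; I call such an $e$ a \emph{non-SR edge}. The strategy is to prove the two implications separately: first that a weak edge and a strong edge are each non-SR edges (giving the forward implication), and then, by contrapositive, that every non-SR edge is itself either weak or strong (giving the backward implication).

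For the forward direction I would argue as follows. If $e=uv$ is strong, then $e$ satisfies Teschner's Condition, so by Theorem~\ref{teschner} it is a bondage edge and $\gamma(T-e)=\gamma(T)+1$; moreover the $\gamma$-set $D$ witnessing strongness has $EPN(e\cap D,D)=\{\overline{e\cap D}\}$, which forces $D-(e\cap D)$ to dominate $V(T)-\{u,v\}$, so adjoining the subdivision vertex $w$ yields a dominating set of $T_e$ of size $\gamma(T)$; hence $\gamma(T_e)=\gamma(T)\neq\gamma(T-e)$. If instead $e$ is weak, then no $\gamma$-set meets $e$, so $e$ is not bondage and $\gamma(T-e)=\gamma(T)$; I would then show $\gamma(T_e)=\gamma(T)+1$, since a dominating set of $T_e$ of size $\gamma(T)$ would, after the routine manipulations at $u,v,w$, produce a $\gamma$-set of $T$ containing $u$ or $v$, contradicting weakness.

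For the backward direction I take a non-SR edge $e=uv$ and split into two cases. In Case (b), where $\gamma(T-e)=\gamma(T)+1$ and $\gamma(T_e)=\gamma(T)$, the edge $e$ is bondage, so Teschner's Condition holds for every $\gamma$-set; a size-$\gamma(T)$ dominating set $M$ of $T_e$ must then contain $w$ (otherwise $M$ would be a $\gamma$-set of $T$ with $u$ or $v$ in it, contradicting Teschner's Condition at $u,v$), whence $M-\{w\}$ dominates $V(T)-\{u,v\}$ and $D:=(M-\{w\})\cup\{u\}$ is a $\gamma$-set with $EPN(u,D)=\{v\}$, so $e$ is strong. The remaining Case (a), where $\gamma(T-e)=\gamma(T)$ and $\gamma(T_e)=\gamma(T)+1$, is the one where I must produce a \emph{weak} edge, and I claim $e$ itself is weak. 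Writing $T-e=T_u\cup T_v$ for the two components, the pivotal observation is that \emph{no minimum dominating set of $T_u$ contains $u$} (and symmetrically for $v$): indeed, pairing such a set with any $\gamma$-set of $T_v$ would give a dominating set of $T_e$ of size $\gamma(T)$ (the subdivision vertex being dominated through $u$), contradicting $\gamma(T_e)=\gamma(T)+1$. This observation immediately supplies a $\gamma$-set $D_0$ of $T$ avoiding $e$, obtained as the union of minimum dominating sets of $T_u$ and $T_v$.

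Granting that observation, I would finish Case (a) by supposing some $\gamma$-set $D$ contains $u$. If $v\in D$, or $v$ is dominated from inside $T_v$, then $D$ already dominates $T_e$, a contradiction; otherwise $v\in EPN(u,D)$, and I combine $D_0\cap T_u$ on the $u$-side with $w$ and with $D\cap T_v$, using Remark~\ref{dominantes} to see that $D\cap T_v$ dominates $T_v-v$. Since the observation forces $|D\cap T_u|>\gamma(T_u)$, this combined set has size at most $\gamma(T)$ and dominates $T_e$, again a contradiction; hence no $\gamma$-set meets $e$ and $e$ is weak. The hard part will be exactly this Case (a): establishing cleanly that $\gamma(T_e)=\gamma(T)+1$ prohibits any minimum dominating set of $T_u$ from using $u$, and then wielding the resulting size bound $|D\cap T_u|>\gamma(T_u)$ to rule out the awkward configuration in which $u$ sits in a $\gamma$-set with $v$ as its only private neighbour on the $T_v$-side while $u$ still keeps a private neighbour inside $T_u$.
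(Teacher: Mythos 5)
Your proof is correct, and while your forward implication is essentially the paper's, your converse takes a genuinely different route. For the forward direction both arguments are the same: a strong edge is bondage by Theorem \ref{teschner}, and the witnessing $\gamma$-set $D$ with $EPN(e\cap D,D)=\{\overline{e\cap D}\}$ becomes, after trading $e\cap D$ for the subdivision vertex $w$, a dominating set of $T_e$ of size $\gamma(T)$; a weak edge has $\gamma(T-e)=\gamma(T)$ by Remark \ref{dominantes}, and the two routine $w$-manipulations show $\gamma(T_e)=\gamma(T)+1$. For the converse, the paper argues directly and splits according to how $\gamma$-sets intersect $e$, whereas you argue the contrapositive and split according to which of the two quantities jumps. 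Your Case (b) coincides with the paper's Case 2: a size-$\gamma(T)$ dominating set of $T_e$ must contain $w$, and swapping $w$ for $u$ produces a $\gamma$-set $D$ of $T$ with $EPN(u,D)=\{v\}$, i.e., strongness of $e$ (the paper phrases this as a contradiction with Remark \ref{Two in EPN}). Your Case (a), however, genuinely differs from the paper's Case 1: the paper takes a $\gamma$-set $D_1$ missing $e$ and a $\gamma$-set $D_2$ hitting it once, and exchanges their pieces $U_i=D_i\cap V(T_u)$, $V_i=D_i\cap V(T_v)$ according to whether $|V_1|\leq |V_2|$; you instead exploit $\gamma(T_u)+\gamma(T_v)=\gamma(T-e)=\gamma(T)<\gamma(T_e)$ to prove the component-wise lemma that no minimum dominating set of $T_u$ (resp.\ $T_v$) contains $u$ (resp.\ $v$). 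This forces $|D\cap V(T_u)|\geq \gamma(T_u)+1$ for any $\gamma$-set $D$ containing $u$, so that $(D_0\cap V(T_u))\cup\{w\}\cup(D\cap V(T_v))$ dominates $T_e$ with at most $\gamma(T)$ vertices, a contradiction; hence every $\gamma$-set misses $e$ and $e$ is weak. (One small point: the fact that $D\cap V(T_v)$ dominates $V(T_v)-\{v\}$ is not really Remark \ref{dominantes} but the tree-separation fact that $uv$ is the only edge between $T_u$ and $T_v$; the claim is immediate either way, and the leaf degenerate cases cause no trouble since your contradictions remain valid.) As for what each route buys: yours makes explicit the clean dichotomy that, at a non-SR edge, $\gamma(T_e)>\gamma(T-e)$ occurs exactly when $e$ is weak and $\gamma(T-e)>\gamma(T_e)$ exactly when $e$ is strong; the paper's exchange argument instead sets up the swapping machinery that it reuses in its subsequent characterization of bondage edges in SR-trees.
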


\begin{proof} First we prove that if there is a weak or a strong edge in a tree $T$,
then $T $ is not an SR-tree.

\vspace{.3cm}
Let $D\in \Gamma \left( T\right) $. Suppose that $e=uv$ is a weak edge, then
$e\cap D=\emptyset $. By Remark \ref{dominantes},  $D\succ \left(
T-e\right) $, so $\gamma (T-e)=\gamma (T)$.  Suppose there exists a dominating set $D^{\prime }$ of $%
T_{e}$ such that $\left\vert D^{\prime }\right\vert =\left\vert D\right\vert
$. Let $w$ be the new vertex in $T_e$. If $w\notin D^{\prime }$, then $%
D^{\prime }$ belongs to $\Gamma \left( T\right) $ and $e\cap D^{\prime }\neq
\emptyset $, contradicting that $e$ is a weak edge. Otherwise, $w\in D^{\prime }$%
\ and $\left( D^{\prime }-\left\{ w\right\} \right) \cup \left\{ u\right\} $
is a $\gamma $-set of $T$ containing $u$, which contradicts that $e$ is
a weak edge. Therefore if $e$ is a weak edge, then $\gamma (T_e) > \gamma (T-e)$ and $T$ is not and SR -graph.

Suppose that $e$ is a strong edge of $T$. Then there exists
$D^{\prime }\in \Gamma \left( T\right) $ such that
$EPN\left( e\cap D^{\prime },D ^{\prime }\right) =\left\{ \overline{e\cap D^{\prime}}\right\}$. Therefore
$D=\left( D^{\prime }-\left\{ e\cap D^{\prime }\right\} \right)
\cup \left\{ w\right\}$ is a dominating set of $T_{e}$ and $\gamma \left( T_{e}\right) =\gamma \left( T\right) $. On the other hand,
by Theorem \ref{teschner}, $e$ is a bondage edge of $T$. So
$\gamma \left( T_{e}\right) <\gamma \left( T-e\right) $ and we conclude that
$T$ is not an SR-graph.

\vspace{.3cm}

Now we show that if there is neither weak nor strong edge in $T$, then $T$ is an SR-graph.

Let $e=uv$ be an edge of $T$. If there exists $D\in \Gamma
\left( T\right) $ such that $e\cap D=e$, then $\gamma(T-e)=\gamma \left( T\right)=\gamma(T_{e})$. So, we may
suppose that $\left\vert e\cap D\right\vert <2$ for any $D$ in $\Gamma
\left( T\right) $.

We consider two cases.

\textbf{Case 1}. There exists $D_{1}\in \Gamma \left( T\right) $ such that
$e\cap D_{1}=\emptyset $.

By Remark \ref{dominantes},  $D_{1}$ is a minimum dominating set of $T-e$. Since $e$ is not a weak edge, there
exists $D_{2}\in \Gamma \left( T\right) $ such that $\left\vert e\cap
D_{2}\right\vert =1$. Let $u=e\cap D_{2}$. We have partitions of $D_1=V_1\cup U_1$ and $D_2=V_2\cup U_2$ where
$V_1=V(T_v)\cap D_1, V_2=V(T_v)\cap D_2, U_1=V(T_u)\cap D_1$ and  $U_2=V(T_u)\cap D_2$. Since $T$ is a tree, we have the following relations:
$U_1\succ T_u, U_2\succ T_u$ and $V_1\succ T_v$.

If $|V_1|\leq |V_2|$, define $D=V_1\cup U_2$, then $|D|\leq |D_2|$.
Like  $V_1\succ T_v$ and  $U_2\succ T_u$, $D$ is a dominating set of $T-e$.
Therefore  is also a dominating set of $T$, so $\left\vert D\right\vert
=\left\vert D_{2}\right\vert $. Moreover,  $D\succ T-e$ and $u\in D$, which implies that $D\succ T_{e}$.

If $|V_1| > |V_2|$, define the set of vertices $D=U_1\cup \{ v\} \cup V_2$, then $|D|\leq |D_1|$.
Like  $U_1\succ T_u$ and  $(V_2\cup \{ v\})\succ T_v$, $D$ is a dominating set of $T_e$.

Therefore, in this case $\gamma(T-e)=\gamma \left( T\right)=\gamma(T_{e})$.

\textbf{Case 2}. For any $D$ in $\Gamma \left( T\right) $, $\left\vert e\cap D\right\vert =1$.

If there exists $D$ in $\Gamma \left( T\right) $ such that $\overline{e\cap D}\notin EPN\left( e\cap D,D\right) $,
then $D\succ T-e$ and $D\succ T_{e}$. Therefore we may suppose that $e$ satisfies Teschner's Condition and by
Theorem \ref{teschner} we have $\gamma \left( T-e\right) >\gamma \left( T\right) $.

Let $\gamma (T)=s$. Suppose there exists $D^{\prime }$ a $\gamma $-set of $T_{e}$ such that
$\left\vert D^{\prime }\right\vert =s$. Recall $e=uv$ and let $w$ be the new vertex in $T_{e}$.

If $w\notin D^{\prime }$, then $D^{\prime }\succ T-e$, which contradicts
$\gamma \left( T-e\right) >\gamma \left( T\right) $. In the other case,
$w\in D^{\prime }$, $D=\left( D^{\prime }-\left\{ w\right\} \right) \cup
\left\{ u\right\} $ is a dominating set of $T$ and
$u\notin D^{\prime }$ because $\left\vert D\right\vert \geq s$.

So, $\left\vert D\right\vert =s$, $D\in \Gamma \left( T\right) $, $e\cap
D=u$ and $v\in EPN\left( e\cap D,D\right)$. Moreover, since
$u\notin D^{\prime }$ and $D^{\prime }\succ T_{e}$, we have that for any $x$
in $N_T\left( u\right) -\left\{ v\right\} $ there exists $d\in \left\{
D-\left\{ u\right\} \right\} $ such that $d\succ x$. Therefore $\left( N_T\left( u\right) -\left\{ v\right\}
\right) \cap EPN\left( u,D\right) =\emptyset $ which contradicts Remark \ref{Two in EPN}.

Therefore $\gamma \left( T-e\right) =\gamma \left( T\right) +1=\gamma \left(
T_{e}\right) $.
\end{proof}

\vspace{.3cm}

The next theorem gives a characterization of bondage edges in SR-trees.

\bigskip

\begin{theorem} For an SR-tree $T$ and $e\in E\left( T\right)$,
$e$ is a bondage edge of $T$ if and only if one of the ends of $e$ is a leaf and the other is a strong support.
\end{theorem}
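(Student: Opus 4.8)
The plan is to treat the two implications separately and, throughout the harder direction, to exploit the SR-tree characterization of Theorem \ref{SR-treeCaracterizacion}: an SR-tree contains neither weak nor strong edges. The easy direction is immediate: if one end of $e=uv$ is a leaf $v\in\mathcal{L}_{T}(u)$ and the other end $u$ is a strong support, then Lemma \ref{SR fuerte} gives $\gamma(T-e)=\gamma(T)+1>\gamma(T)$, so $e$ is a bondage edge.

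For the converse, suppose $e=uv$ is a bondage edge. By Theorem \ref{teschner} every $D\in\Gamma(T)$ satisfies $|e\cap D|=1$ and $\overline{e\cap D}\in EPN(e\cap D,D)$; since $T$ has no strong edge and $e$ already satisfies Teschner's Condition, no $D$ can satisfy $EPN(e\cap D,D)=\{\overline{e\cap D}\}$, so by Remark \ref{Two in EPN} the vertex $e\cap D$ always has a second external private neighbor lying in $N_T(e\cap D)-\{\overline{e\cap D}\}$. The first key step is to show that all $\gamma$-sets meet $e$ in the same endpoint. Assuming instead that some $D_1$ contains $u$ and some $D_2$ contains $v$, I would split each $\gamma$-set along $T_u$ and $T_v$ and, using the private-neighbor information, verify the size counts $|D_1\cap V(T_v)|=\gamma(T_v)-1$ and $|D_2\cap V(T_u)|=\gamma(T_u)-1$; then $(D_2\cap V(T_u))\cup(D_1\cap V(T_v))\cup\{u\}$ is a $\gamma$-set in which the only external private neighbor of $u$ is $v$, making $e$ a strong edge, a contradiction. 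Hence I may fix $u=e\cap D$ and $v=\overline{e\cap D}$ for every $D$.

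Next I would show that $v$ is a leaf. Since $v\in EPN(u,D)$ for every $D$, no neighbor of $v$ other than $u$ ever lies in a $\gamma$-set; if $v$ had a further neighbor $v'$, then the edge $vv'$ would miss every $\gamma$-set and hence be a weak edge, contradicting Theorem \ref{SR-treeCaracterizacion}. Thus $v$ is a leaf and $u$ is a support. Finally, to see that $u$ is a \emph{strong} support I would argue by contradiction: if $v$ were the only leaf at $u$, I decompose $T-v$ into the components $S_1,\dots,S_k$ hanging from the non-leaf neighbors $n_1,\dots,n_k$ of $u$. Because $u$ lies in every $\gamma$-set and dominates each $n_i$, each $\gamma$-set of $T$ is $\{u\}$ together with a minimum set dominating each $S_i-\{n_i\}$; the absence of a strong edge then forces some index $i_0$ for which no such minimum set dominates $n_{i_0}$. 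For that index $n_{i_0}$ and all its children avoid every $\gamma$-set, so an edge from $n_{i_0}$ to one of its children is weak, again contradicting the characterization. Therefore $u$ is a strong support.

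The main obstacle is this last step. The second external private neighbor guaranteed by Remark \ref{Two in EPN} may vary from one $\gamma$-set to another, so one cannot simply exhibit a single fixed non-leaf private neighbor and push it. The device that resolves this is the decomposition of $T-v$ into the independent subtrees $S_i$, which converts the ``for every $\gamma$-set'' condition into a statement about a single subtree $S_{i_0}$; the delicate point is to reconcile the two certificates offered by the SR characterization, namely that the absence of a strong edge produces a ``bad'' $S_{i_0}$ whose root and children are untouched by all $\gamma$-sets, while the absence of a weak edge forbids exactly such an edge.
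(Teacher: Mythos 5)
Your proposal is correct and follows essentially the same route as the paper's proof: the forward direction via Lemma \ref{SR fuerte}, and the converse via the same three steps — forcing all $\gamma$-sets to meet $e$ in the same endpoint by an exchange argument across $T_u$ and $T_v$, the weak-edge argument showing $v$ is a leaf, and the subtree decomposition at $u$ with a mixing argument producing a neighbor of $u$ that is privately dominated in every $\gamma$-set and hence (by the same weak-edge argument) a second leaf. The only cosmetic differences are that you replace the paper's two subcases in the exchange step by exact cardinality counts (which do check out, using that bondage forces $\gamma(T)=\gamma(T_u)+\gamma(T_v)-1$), and that you invoke the definition of a strong edge directly where the paper routes the contradiction through Remark \ref{Two in EPN}.
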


\begin{proof} Let $e=uv$. If $u$ is a strong support vertex and $v$ is a leaf, then by Lemma \ref{SR fuerte},  $e$ is a bondage edge of $T$.

Conversely, suppose $e$ is a bondage edge of $T$. By Theorem \ref{teschner}, $\left\vert e\cap D\right\vert =1$
and $\overline{e\cap D}\in EPN\left( e\cap D,D\right) $ for any $D$ in $\Gamma
\left( T\right) $. We consider two cases.

\textbf{Case 1. }There exist $D_{1},D_{2}\in $ $\Gamma \left( T\right) $
such that $e\cap D_{1}=u$ and $e\cap D_{2}=v$.

Let $V_1= V\left( T_{v}\right) \cap D_{1}$, $V_2= V\left( T_{v}\right) \cap D_{2}$, $U_1=V\left( T_{u}\right)\cap D_{1}$ and
$U_2=V\left( T_{u}\right)\cap D_{2}$.

Suppose $|V_1|<|V_2|$. Since $T$ is a tree, $V_1\succ T_{v}-\{v\}\ $ and $U_2\succ T_{u}-\{ u\}$. Therefore
$D=U_2\cup V_1\cup \{ v\}$ is a dominating set of $T$.

As $|D|=|U_2|+|V_1|+1<|U_2|+|V_2|+1=|D_2|+1$, we have $|D|\leq|D_2|$ and hence $D\in \Gamma (T).$

By Theorem \ref{teschner}, $u\in EPN\left( v,D\right)$. Moreover, $V_1\succ T_{v}-\{v\}\ $,
therefore $EPN\left( v,D\right)=\{u\}$. If $w$ denotes the new vertex in $T_e$, then
$D^{\prime }=\left(D-\left\{ v\right\} \right) \cup \left\{ w\right\} $ is a dominating set of
$T_{e}$ such that  $\left\vert
D^{\prime }\right\vert =\left\vert D\right\vert =\gamma \left( T\right)<\gamma (T-e) $, contradicting that $T$ is an SR-tree.

If $|V_1|\geq|V_2|$, consider $D=U_1\cup V_2,$ a dominating set of $T$. Then
$|D_1|=|U_1|+|V_1|\geq |U_1|+|V_2|=|D|$, which implies that $D$ is a $\gamma $-set of $T$ with $e\cap D=e$, a contradiction with the definition of a bondage edge.

\textbf{Case 2.} For any $D$ in $\Gamma \left( T\right)$, $e\cap D=u$.

Suppose $\left\vert N_T\left( v\right) \right\vert \geq 2$ and let $x\in \left\{
N_T\left( v\right) -\left\{ u\right\} \right\} $. By Theorem \ref{teschner}, $v\in EPN\left( u,D\right) $ for any $D$ in
$\Gamma \left( T\right) $, therefore $x\notin D$ for any $D$ in $\Gamma \left( T\right) $. Hence for the edge
$\widetilde{e}=vx$ we have $\widetilde{e}\cap D=\emptyset $ for any $D$ in $%
\Gamma \left( T\right) $ i.e., $\widetilde{e}$ is a weak edge of $T$. Therefore,
by Theorem \ref{SR-treeCaracterizacion}, $T$ is not and SR -tree, a contradiction. So, $v$ is a leaf of $%
T$.

Note that $d_{T}\left( u\right) \geq 2$. If some vertex of $N\left( u\right) -\left\{ v\right\}$ is a leaf,  then $u$ is a
strong support and we are done. Otherwise, let $N\left( u\right) -\left\{ v\right\} =\left\{ x_{1},...,x_{r}\right\} ,$
$r\geq 1$ and $T_{x_{i}}$ be the subtree of $T-x_{i}u$ containing $x_{i}$, $1\leq
i\leq r$. Suppose that for any $1\leq i\leq r$ there exists $D_{i}$ in $\Gamma \left( T\right) $ such that $%
x_{i}\notin EPN\left( u,D_{i}\right) $. Since $T$ is a tree, $\left(
D\cap V\left( T_{x_{1}}\right) ,D\cap V\left( T_{x_{2}}\right) ,...,D\cap
V\left( T_{x_{r}}\right) ,u\right) $ is a partition of $D$ for any\ $D$ in $%
\Gamma \left( T\right) $.

Let $D^{\prime },D^{\prime \prime }\in \Gamma \left( T\right) $. If $%
\left\vert D^{\prime }\cap V\left( T_{x_{j}}\right) \right\vert >\left\vert
D^{\prime \prime }\cap V\left( T_{x_{j}}\right) \right\vert $ for some $j$,
then $D=\bigcup\limits_{i\neq j}\left( D^{\prime }\cap V\left(
T_{x_{i}}\right) \right) \cup \left( D^{\prime \prime }\cap V\left(
T_{x_{j}}\right) \right) \cup \left\{ u\right\} $ is a dominating set of $T$
where $\left\vert D\right\vert <\left\vert D^{\prime }\right\vert $, which
is impossible. Therefore $\gamma \left( T\right)
=\sum\limits_{i=1}^{r}\left\vert D_{i}\cap V\left( T_{x_{i}}\right)
\right\vert +1$ and $D=\bigcup\limits_{i=1}^{r}\left( D_{i}\cap V(T_{x_{i}})\right)
\cup \left\{ u\right\} $ is a $\gamma $-set of $T$ which satisfies $\left(
N\left( u\right) -\left\{ v\right\} \right) \cap EPN\left( u,D\right)
=\emptyset $, what contradicts Remark \ref{Two in EPN}. Therefore there exists $v^{\prime
}\in \left( N\left( u\right) -\left\{ v\right\} \right) $ such that $%
v^{\prime }\in EPN\left( u,D\right) $ for any $D$ in $\Gamma \left( T\right) $%
.
Finally, in the same way that we proved $v$ is a leaf, we can prove that $v^{\prime }$ is also a leaf. Therefore $u$ is a strong support of $T$.

\end{proof}

In some cases, could be useful to rewrite the above theroem as

\begin{theorem}
For an SR-tree $T$ and $e\in E\left( T\right)$, $%
\gamma \left( T-e\right) = \gamma \left( T\right)
$ if and only if no ends of $e$ is a leaf or one of the ends of $e$ is a weak  support.
\end{theorem}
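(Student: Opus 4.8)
The plan is to recognize that this statement is nothing but the logical contrapositive of the preceding theorem, combined with the tight bounds on $\gamma(T-e)$. First I would observe that, by Remark \ref{obs1}, every edge $e$ satisfies $\gamma(T)\le\gamma(T-e)\le\gamma(T)+1$, so that $\gamma(T-e)=\gamma(T)$ holds precisely when $e$ is \emph{not} a bondage edge. The preceding theorem characterizes the bondage edges of an SR-tree as exactly those edges having one end a leaf and the other end a strong support vertex. Hence $\gamma(T-e)=\gamma(T)$ if and only if $e$ does \emph{not} have this form, and the whole task reduces to showing that the negation of the condition of the preceding theorem coincides with the displayed condition ``no end of $e$ is a leaf, or one of the ends is a weak support''.

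The core of the argument is a short structural case analysis on the edge $e=uv$. I would first record that an SR-tree has at least three vertices, since $P_2=K_2$ is an ASR-graph by Proposition \ref{SR-path}; thus $T\neq K_2$, and consequently no edge of $T$ has both ends equal to leaves, while if one end of $e$ is a leaf then its neighbor has degree at least two, is adjacent to that leaf, and is therefore a support vertex that is either weak or strong. With these facts I would verify the equivalence in two directions. If one end of $e$ is a leaf and the other is a strong support, then the leaf end is not a support vertex (its unique neighbor, being a strong support, has degree at least two and so is not a leaf), and the other end is strong rather than weak; hence neither disjunct of the displayed condition can hold, and that condition fails. Conversely, if the pair (leaf, strong support) does not occur at $e$, then either no end of $e$ is a leaf, giving the first disjunct, or some end is a leaf whose neighbor is a support vertex which, not being strong (else the excluded pair would appear), must be a weak support, giving the second disjunct.

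Chaining these equivalences yields that $\gamma(T-e)=\gamma(T)$ is equivalent to ``$e$ is not a bondage edge'', which is equivalent to the negation of ``one end a leaf and the other a strong support'', which in turn is equivalent to ``no end of $e$ is a leaf or one end is a weak support'', exactly the claim. The only genuinely delicate point is the logical bookkeeping of this negation together with the need to exclude $K_2$: without the observation that a leaf end forces its neighbor to be a genuine (weak or strong) support vertex, the two disjuncts would not exhaust the complement of the bondage condition. I expect no computational difficulty beyond this finite case analysis, since all the quantitative content is already supplied by Remark \ref{obs1} and the preceding characterization of bondage edges.
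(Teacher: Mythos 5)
Your proposal is correct and takes essentially the same route as the paper: the paper states this theorem without a separate proof, offering it as a direct rewriting of the preceding characterization of bondage edges in SR-trees, which is exactly your argument (equating $\gamma(T-e)=\gamma(T)$ with ``$e$ is not a bondage edge'' via Remark \ref{obs1}, then negating the leaf/strong-support condition). Your explicit case analysis, including the exclusion of $K_2$ and the observation that a leaf end of an edge cannot itself be a support vertex, merely fills in the logical bookkeeping that the paper leaves implicit.
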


\section{Anti-sub-removable graphs}

In this section we characterize ASR-graphs with domination number one, give some properties of ASR-graphs, show that ASR-graphs are $\gamma$-insensitive and give an infinity family of ASR-graphs with an arbitrary domination number.

Since $P_2$ is an ASR-graph, from now, we assume that $|V(G)|\geq 3$ for any graph $G.$

\begin{remark}\label{dominacion1} Let $G$ be a graph. If $\gamma (G)=1,$ then $\gamma \ (G_e)=2$ for any edge $e\in E(G)$.
\end{remark}

\begin{lemma}\label{no-SR-ASR}
If $G\neq K_{1,n}$ is a graph with exactly one or two universal vertices, then $G$ is neither SR nor ASR-graph.
\end{lemma}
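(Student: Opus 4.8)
The plan is to exhibit, for such a graph $G$, both an edge on which removal and subdivision agree and an edge on which they differ, thereby ruling out both the SR and the ASR property. Throughout I would use the hypothesis $\gamma(G)=1$, which holds precisely because $G$ has a universal vertex, together with Remark \ref{dominacion1}, which tells me that $\gamma(G_e)=2$ for \emph{every} edge $e$. Thus the entire question reduces to analyzing $\gamma(G-e)\in\{1,2\}$ across the edges of $G$: an edge $e$ behaves the SR-way when $\gamma(G-e)=2$ and the ASR-way when $\gamma(G-e)=1$. To prove $G$ is neither SR nor ASR it therefore suffices to produce one edge of each type.

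\textbf{Finding an ASR-type edge (where $\gamma(G-e)=1$).} Let $u$ be a universal vertex. I would take $e=uv$ where $v$ is any non-universal vertex adjacent to $u$; such a $v$ exists because $G\neq K_{1,n}$ guarantees $G$ is not merely a star, so $u$ has a neighbor of degree at least two, and in particular $G$ has at least $3$ vertices with $u$ still dominating after deleting one incident edge. After removing $e$, the vertex $u$ is still adjacent to every vertex except possibly $v$; but $v$ retains at least one other neighbor (its degree was $\geq 2$), and more to the point I can instead choose $v$ so that $u$ still dominates $G-e$. The cleanest choice: since there are one or two universal vertices, pick $u$ universal and $v$ a neighbor that is \emph{not} the unique vertex whose only connection to the rest is through $u$. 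Then $\{u\}$ still dominates $G-e$, giving $\gamma(G-e)=1=\gamma(G-e)\neq\gamma(G_e)=2$, an ASR-type edge.

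\textbf{Finding an SR-type edge (where $\gamma(G-e)=2$).} Here I would use that $G$ has \emph{exactly} one or two universal vertices. If $G$ has exactly one universal vertex $u$, I choose an edge $e$ whose deletion destroys the only domination-by-one option, forcing $\gamma(G-e)=2$; concretely, removing a suitable edge incident to $u$ makes $u$ no longer universal, and since no other vertex was universal, no single vertex dominates $G-e$, so $\gamma(G-e)=2=\gamma(G_e)$, an SR-type edge. If $G$ has exactly two universal vertices $u_1,u_2$, then $e=u_1u_2$ is an edge; after removing it each $u_i$ is adjacent to all vertices except the other, so $\{u_i\}$ fails to dominate $u_{3-i}$, and one checks no single vertex dominates $G-e$, giving $\gamma(G-e)=2=\gamma(G_e)$, again an SR-type edge.

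The main obstacle, and the place requiring genuine care, is the SR-type edge in the single-universal-vertex case: I must verify that after deleting the right edge incident to $u$ there is truly \emph{no} universal vertex left, i.e.\ that removing one edge can knock out $u$'s universality without accidentally leaving some other vertex dominating. The hypothesis $G\neq K_{1,n}$ is exactly what prevents the degenerate star behaviour, and the assumption of at most two universal vertices is what guarantees the replacement candidates are scarce; I would argue that since $u$ is the unique universal vertex there is a vertex $z$ at distance two from some vertex, and choosing $e$ to separate $u$ from an appropriate non-neighbour-closure forces $\gamma(G-e)=2$. Once both edge types are produced, the conclusion that $G$ is neither SR nor ASR is immediate from the definitions, since SR requires equality on all edges and ASR requires inequality on all edges.
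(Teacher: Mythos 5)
Your overall framing is exactly the paper's: since $\gamma(G)=1$, Remark \ref{dominacion1} gives $\gamma(G_e)=2$ for every edge, so everything reduces to exhibiting one edge with $\gamma(G-e)=2$ (killing ASR) and one edge with $\gamma(G-e)=1$ (killing SR). Your SR-type edges are fine: in the one-universal-vertex case any edge incident to the unique universal vertex $u$ works, because a vertex dominating $G-e$ by itself would have to be universal in $G-e$, hence universal in $G$, and the only candidate $u$ has lost its universality; in the two-universal-vertex case the edge $u_1u_2$ works for the same reason. This matches the paper's choices $e=xy$ and $e=xy$ respectively.

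However, your ASR-type edge construction contains a genuine error. You take $e=uv$ with $u$ universal and claim that ``$\{u\}$ still dominates $G-e$.'' This is impossible: once the edge $uv$ is deleted, $v\notin N_{G-e}[u]$, so $\{u\}$ never dominates $G-e$ when $e$ is incident to $u$, no matter how $v$ is chosen. Whether $v$ ``retains another neighbor'' is irrelevant, since domination by the single set $\{u\}$ requires $v$ to lie in $u$'s closed neighborhood. In the case of a unique universal vertex $x$, every edge incident to $x$ therefore has $\gamma(G-e)=2$, and your proposal never produces an edge with $\gamma(G-e)=1$; your argument only accidentally gives the right value in the two-universal-vertex case, because there the \emph{other} universal vertex (not $u$) survives and dominates. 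The correct move -- and this is where the hypothesis $G\neq K_{1,n}$ is really used -- is to pick the edge \emph{away} from the universal vertex: since $G$ is not a star, some neighbor $y$ of $x$ has a neighbor $z\neq x$, and for $f=yz$ the vertex $x$ is untouched, so it remains universal in $G-f$ and $\gamma(G-f)=1$. (In the two-universal-vertex case, any edge $e\neq xy$ leaves at least one of $x,y$ universal.) You even observe the existence of the vertex $y$ of degree at least two, but then you attach the edge to $u$ instead of to $y$ and $z$; with that single correction your proof becomes the paper's proof.
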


\begin{proof}
By Remark \ref{dominacion1},  $\gamma\ (G_e)=2$ for any edge $e\in E(G)$.

Suppose $G$ has a unique universal vertex $x$. As $G$ is not a star, there exist vertices $y,z$ in $V(G)$ such that $e=xy, f=yz$ are edges of $G$.
Then $\gamma \ (G-e)=2$ and $\gamma \ (G-f)=1$. So, in this case, $G$ is neither  SR  nor ASR-graph.

Otherwise $G$ has exactly two universal vertices $x,y$. Let $e\neq xy\in E(G)$, then $\gamma \ (G-xy) =2$ and $\gamma \ (G-e)=1$. Therefore, $G$ is neither SR nor ASR-graph.
\end{proof}

\begin{lemma}\label{ASR-3u} If $G$ is a graph with at least three universal vertices, then $G$ is an ASR-graph.
\end{lemma}

\begin{proof}
Let $e$ be an edge of $G$. By hypothesis, the graph $G-e$  has at least one universal vertex, so  $\gamma \ (G-e)=1$ and by Remark \ref{dominacion1},  $\gamma\ (G_e)=2$. Therefore $G$ is an ASR-graph.
\end{proof}

Given two vertex-disjoint graphs $G$ and $H$, the sum $G+H$ is
the graph with vertex set $V\left( G\right) \cup V\left( H\right) $ and edge
set $E\left( G\right) \cup E\left( H\right) \cup \left\{
xy :x\in V\left( G\right) \text{ and }y\in V\left( H\right)
\right\} $.

As a direct consequence of Lemmas \ref{no-SR-ASR} and \ref{ASR-3u} we have the following characterization of ASR-graphs with order at least three and domination number one.

\begin{theorem}\label{ASRg=1}
A graph $G$ with $\gamma \left( G\right) =1$ is an ASR-graph if and
only if there exists a (possible null) graph $H$ such that $G=K_{3} + H$.
\end{theorem}

\begin{corollary}
Every graph is an induced subgraph of an ASR-graph.
\end{corollary}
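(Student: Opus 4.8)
The plan is to mirror the proof of Corollary \ref{induced-hairy}, replacing the corona construction there by a join against $K_3$. Given an arbitrary graph $G$, I would set $H = K_3 + G$ and claim that $H$ is the desired ambient ASR-graph containing $G$ as an induced subgraph.

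First I would check that $H$ is an ASR-graph. Each of the three vertices of the $K_3$ summand is adjacent, inside the join, both to the other two vertices of $K_3$ and to every vertex of $G$; hence all three are universal vertices of $H$. By Lemma \ref{ASR-3u}, $H$ is an ASR-graph. Equivalently, one may invoke Theorem \ref{ASRg=1}, since $\gamma(H)=1$ (any universal vertex dominates $H$) and $H$ is visibly of the form $K_3 + G$.

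Next I would verify that $G$ is an induced subgraph of $H$. By the definition of the join, $E(H) = E(K_3) \cup E(G) \cup \{\, xy : x\in V(K_3),\ y\in V(G)\,\}$, so the only edges of $H$ having both endpoints in $V(G)$ are precisely the edges of $G$. Therefore $H[V(G)] = G$, which is exactly what is required.

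There is essentially no obstacle here: the statement is an immediate consequence of Lemma \ref{ASR-3u} together with the observation that adjoining $K_3$ via the join leaves the adjacencies within $V(G)$ untouched. The only points deserving a word of care are confirming that the three adjoined vertices really are universal in $H$ (so that the hypothesis of Lemma \ref{ASR-3u} is genuinely met) and that passing to the subgraph induced on $V(G)$ recovers $G$ exactly rather than some supergraph of it. The construction moreover covers the degenerate inputs $G=K_1$, $G=K_2$, and disconnected $G$ uniformly, since in every case $H=K_3+G$ is connected and of order at least three, consistent with the standing assumption of this section.
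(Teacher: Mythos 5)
Your proposal is correct and is precisely the argument the paper intends: the corollary is stated immediately after Theorem \ref{ASRg=1}, and the construction $K_3+G$, justified by Lemma \ref{ASR-3u} (or the ``if'' direction of Theorem \ref{ASRg=1}), with the observation that the join adds no edges inside $V(G)$, is the evident proof. Your added care about universality of the three adjoined vertices and about degenerate inputs $K_1$, $K_2$, and disconnected $G$ is sound and does not diverge from the paper's route.
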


\begin{lemma}\label{particion} If $G$ is an ASR-graph, then for any $\gamma $-set $D=\{ x_{1},x_{2},...,x_{p}\}$ of $G$ we have $( N[x_1], N[x_2], ..., N[x_p] )$ is a partition of $V(G)$.
\end{lemma}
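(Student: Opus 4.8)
The plan is to prove the two required properties of a partition separately: covering is immediate, so the real content is pairwise disjointness of the blocks. Since $D\succ G$, we have $V(G)=N_G[D]=\bigcup_{i=1}^{p}N[x_i]$, so the blocks already cover $V(G)$; it remains to show $N[x_i]\cap N[x_j]=\emptyset$ whenever $i\neq j$. I would argue by contradiction: assume $N[x_i]\cap N[x_j]\neq\emptyset$ for some $i\neq j$, and then produce a single edge $e$ for which $D$ remains a dominating set of \emph{both} $G-e$ and $G_e$. Once such an edge is found, Remark \ref{obs1} forces $\gamma(G-e)=\gamma(G)=\gamma(G_e)$, contradicting the assumption that $G$ is an ASR-graph. (If $p=1$ there are no distinct indices and the claim is vacuous, so the interesting case is $p\geq 2$.)

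The argument splits according to how the two closed neighborhoods meet. If the common vertex is one of $x_i,x_j$, or more generally if the common vertex lies in $D$, then two vertices of $D$ are adjacent; relabeling, say $x_ix_j\in E(G)$, and I take $e=x_ix_j$. Otherwise $D$ is independent, so any common vertex $z\in N[x_i]\cap N[x_j]$ satisfies $z\neq x_i,x_j$ and $z\notin D$, while $z$ is adjacent to both $x_i$ and $x_j$; in this case I take $e=x_iz$. Thus in each configuration the chosen edge has the feature that the endpoint which might lose a dominator still retains an alternative dominator inside $D$.

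Finally I would verify domination after each modification, which is routine. In the first case, removing or subdividing $x_ix_j$ only affects the adjacency between $x_i$ and $x_j$ and introduces (in $G_e$) a new vertex $w$ adjacent to both; since $x_i,x_j\in D$ dominate themselves and $w\in N[x_i]$, we get $D\succ G-e$ and $D\succ G_e$. In the second case, removing or subdividing $x_iz$ leaves $z$ dominated by $x_j\in D$ and the new vertex $w$ dominated by $x_i\in D$, while no other vertex changes its status, so again $D\succ G-e$ and $D\succ G_e$. Either way $\gamma(G-e)=\gamma(G)=\gamma(G_e)$, the desired contradiction, and hence the blocks are disjoint. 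The main point requiring care is the case split itself: one must observe that independence of $D$ forces the common vertex out of $D$ (otherwise it collapses back to the adjacency case), and must choose the edge so that the ``at risk'' endpoint keeps a dominator; the domination checks themselves are then straightforward.
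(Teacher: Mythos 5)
Your proof is correct and takes essentially the same approach as the paper: assuming $y\in N[x_i]\cap N[x_j]$ for $i\neq j$, one exhibits a single edge incident with $x_i$ and the common vertex for which $D$ still dominates both $G-e$ and $G_e$, contradicting the ASR property via Remark \ref{obs1}. The paper does this in one stroke with $e=x_iy$, while your explicit case split (common vertex in $D$ versus $D$ independent) just selects the same edge configuration case by case.
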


\begin{proof} For $1\leq i\leq p$, $N[x_i]\neq \emptyset $  and $V(G)= \bigcup\limits_{i=1}^{p} N[x_i]$.  Suppose there exists a vertex $y\in N[x_i]\cap N[x_j]$ for some $i\neq j$. Hence, for the edge $e=x_iy$, it is clear that $D\succ G-e$ and $D\succ G_{e}$, which contradicts that $G$ is an ASR-graph.
\end{proof}

Observe that this lemma implies that if $G$ is an ASR-graph, then every $\gamma $-set of $G$ is an independent set. The converse of this lemma is not true (see Figure 1).
\begin{figure}[h]
   \centering
    \includegraphics[width=1\textwidth]{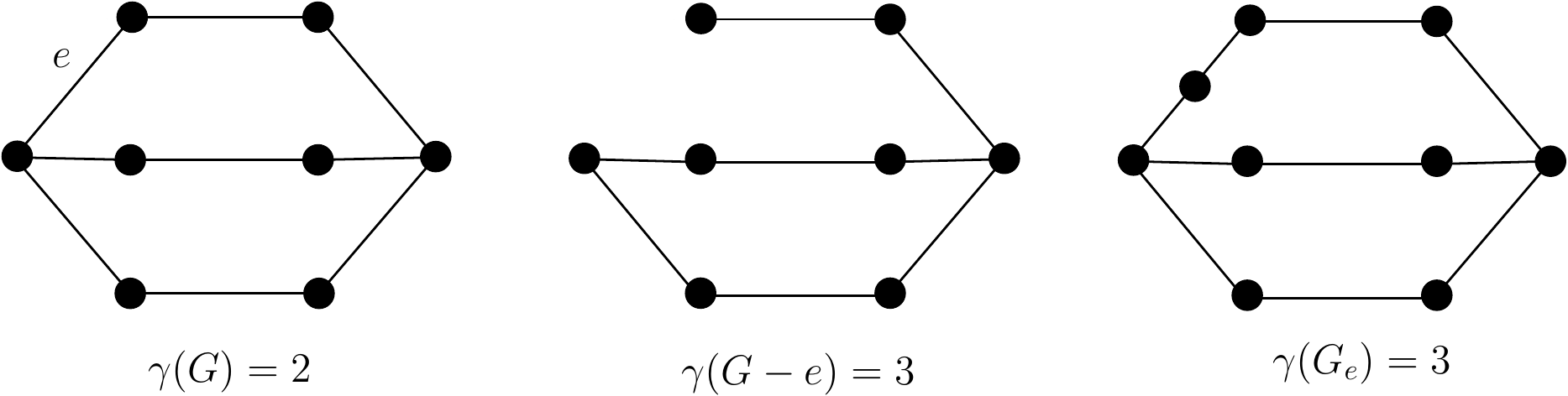}
        \caption{The converse of Lemma \ref{particion} is not true. }
  \label{Figure 1}
\end{figure}

\begin{remark}\label{NoLeaf}
Every graph $G$ has a $\gamma$-set which not contains a leaf of $G$.
\end{remark}

\begin{theorem}\label{ASR no bondages} An ASR-graph has no bondage edges.
\end{theorem}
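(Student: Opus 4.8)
The plan is to argue by contradiction. Suppose $e=uv$ is a bondage edge of an ASR-graph $G$. By definition and Remark~\ref{obs1} this means $\gamma(G-e)=\gamma(G)+1$, and since $G$ is an ASR-graph we have $\gamma(G_e)\neq\gamma(G-e)$; as $\gamma(G_e)\in\{\gamma(G),\gamma(G)+1\}$ again by Remark~\ref{obs1}, I obtain $\gamma(G_e)=\gamma(G)$. Fix a $\gamma$-set $D^{*}$ of $G_e$ with $|D^{*}|=\gamma(G)$ and let $w$ be the new vertex created by subdividing $e$. The argument splits according to whether $w\in D^{*}$. The observation I use throughout is that the only vertices whose closed neighborhood changes between $G$ and $G_e$ are $u,v,w$, and that $w$ can dominate only $u$, $v$ and itself in $G_e$; hence $D^{*}\setminus\{w\}$ dominates $V(G)\setminus\{u,v\}$ both in $G$ and in $G-e$.

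First, the case $w\notin D^{*}$. Then $w$ is dominated in $G_e$ by $u$ or $v$, so $\{u,v\}\cap D^{*}\neq\emptyset$, and since $uv\in E(G)$ one checks at once that $D^{*}$ dominates $G$, whence $D^{*}\in\Gamma(G)$. If $u,v\in D^{*}$ this already contradicts Theorem~\ref{teschner}, which forces $|e\cap D|=1$ for every $D\in\Gamma(G)$. If exactly one, say $u\in D^{*}$ and $v\notin D^{*}$, then Teschner's Condition gives $v\in EPN(u,D^{*})$, i.e. $v\notin N_G[D^{*}\setminus\{u\}]$; but domination of $v$ in $G_e$, where $v$ is not adjacent to $u$, requires a neighbor of $v$ other than $u$ and $w$ to lie in $D^{*}$, producing a vertex of $D^{*}\setminus\{u\}$ adjacent to $v$ --- a contradiction (equivalently, $v$ would have two dominators in the $\gamma$-set $D^{*}$, contradicting Lemma~\ref{particion}).

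The main work is the case $w\in D^{*}$. Put $D_u=(D^{*}\setminus\{w\})\cup\{u\}$ and $D_v=(D^{*}\setminus\{w\})\cup\{v\}$. Using $uv\in E(G)$ one verifies that both sets dominate $G$, and a cardinality count (each has size at most $|D^{*}|$ yet must dominate $G$) forces $u\notin D^{*}$, $v\notin D^{*}$, and $D_u,D_v\in\Gamma(G)$. Now I apply Lemma~\ref{particion} to the two $\gamma$-sets $D_u$ and $D_v$: they share the common collection of blocks $\{N_G[x]:x\in D^{*}\setminus\{w\}\}$, so the one remaining block must coincide, giving $N_G[u]=N_G[v]=:P$. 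If $P\supsetneq\{u,v\}$, any $z\in P\setminus\{u,v\}$ is a common neighbor of $u$ and $v$, and $z\notin D^{*}\setminus\{w\}$ by the partition; then $(D^{*}\setminus\{w\})\cup\{z\}$ is a dominating set of $G-e$ of size $\gamma(G)$, contradicting $\gamma(G-e)=\gamma(G)+1$. Otherwise $P=\{u,v\}$ forces $d_G(u)=d_G(v)=1$, so the component of $u$ is $\{u,v\}$ and, by connectivity, $|V(G)|=2$, contradicting the standing assumption $|V(G)|\geq 3$ of this section.

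The step I expect to be the main obstacle is the case $w\in D^{*}$: establishing $u,v\notin D^{*}$ together with $D_u,D_v\in\Gamma(G)$, and then extracting the twin identity $N_G[u]=N_G[v]$ from Lemma~\ref{particion}. Once the two $\gamma$-sets are shown to agree on all but one block, the common-neighbor contradiction (or the degeneracy to $K_2$) follows directly.
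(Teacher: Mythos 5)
Your proof is correct, but it follows a genuinely different route from the paper's. Both arguments pivot on Lemma~\ref{particion} and Theorem~\ref{teschner}, and both begin by noting that a bondage edge $e=uv$ in an ASR-graph forces $\gamma(G_e)=\gamma(G)$; after that the strategies diverge. The paper fixes a $\gamma$-set $D=\{u,x_2,\dots,x_p\}$ of $G$ itself (using Remark~\ref{NoLeaf} to ensure $d_G(u)\geq 2$, and treating $\gamma(G)=1$ separately), applies the partition lemma to $D$ to show that any $\gamma(G)$-sized dominating set of $G_e$ must have the form $Z\cup\{w\}$ with $z_i\in N[x_i]$, and then derives its contradiction \emph{at a different edge}: it produces $f=yz_i$ for which $Z\cup\{u\}$ dominates both $G-f$ and $G_f$, violating the ASR property globally. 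You instead split on whether the subdivision vertex $w$ lies in the $\gamma$-set $D^{*}$ of $G_e$: when $w\notin D^{*}$, $D^{*}$ is a $\gamma$-set of $G$ and Teschner's Condition is violated outright; when $w\in D^{*}$, you apply the partition lemma \emph{twice}, to the two $\gamma$-sets $D_u$ and $D_v$ obtained by swapping $w$ for $u$ or $v$, and extract the closed-twin identity $N_G[u]=N_G[v]$, after which a common neighbor $z$ yields a $\gamma(G)$-sized dominating set of $G-e$, contradicting bondage \emph{at $e$ itself} (the degenerate case $N_G[u]=\{u,v\}$ dying by connectivity and the standing assumption $|V(G)|\geq 3$). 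Your version buys uniformity and economy: no special case for $\gamma(G)=1$, no appeal to Remark~\ref{NoLeaf}, no auxiliary edge, and it exposes a clean structural fact (the ends of a putative bondage edge would have to be closed twins). It also closes a step the paper leaves implicit: in the paper's argument the possibility $D'=Z\cup\{u\}$ is not explicitly excluded (one needs Teschner's EPN condition to rule it out, exactly as in your first case), whereas your case analysis handles this situation head-on. What the paper's route buys in exchange is an explicit second edge $f$ witnessing the failure of the ASR property, i.e., slightly more global information about where ASR breaks, at the cost of extra bookkeeping.
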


\begin{proof} Let $G$ be a connected ASR-graph. If $\gamma (G)=1$, by Remark \ref{dominacion1},  $G$ has no bondage edges. So we may assume that $G$ has order at least $4$ and $\gamma
\left( G\right)=p \geq 2$.

Suppose that $e=uv$ is a bondage edge of $G$, i.e.,   $\gamma \left( G-e\right) >\gamma \left( G\right) $. By Theorem \ref{teschner},  $|e\cap D|=1$ for any
$D \in  \Gamma (G)$. Let $D=\{ u, x_{2},..., x_{p}\}$   be a $\gamma $-set of $G$, as $G\neq K_2$ and by Remark \ref{NoLeaf} we may assume $d_G(u) \geq 2$. Since $G$ is an ASR-graph, there exists $D^{\prime }\succ G_{e}$ such that
$|D^{\prime}|=p. $

By Lemma \ref{particion}, $N_{G_{e}}\left[ x_{i}\right] =N_{G}\left[ x_{i}\right] $ for any $i\geq 2$ and
$\left\vert \bigcup\limits_{i=2}^{p}\left( D^{\prime }\cap N_{G_{e}}\left[ x_{i}\right] \right) \right\vert \geq p-1$.

Observe that if $\left\vert D^{\prime }\cap N_{G_{e}}\left[ x_{i}\right] \right\vert \geq 2$ for some
$i\geq 2$, then $D^{\prime }\cap N_{G_{e}}\left[ u\right] =\emptyset $, which contradicts that $D'$ is a dominating set of $G_e$.
Hence $\left\vert D^{\prime }\cap N_{G_{e}}\left[ x_{i}\right] \right\vert =1$ for any $i\geq 2$.
Let $z_{i}=D^{\prime }\cap N_{G_{e}}\left[ x_{i}\right] $, $2\leq i\leq p$ and $Z=\left\{ z_{2},...,z_{p}\right\} $.

Let $w$ be the new vertex in $G_{e}$. Since $\left\vert Z \right\vert=p-1$ and $Z \not\succ w$ we have $\left\vert D^{\prime }\cap \left\{ u,v,w\right\}
\right\vert =1$. On the other hand, in $G_{e}$, $Z\nsucc u$ by Lemma \ref{particion} and $%
v\nsucc u$, so $D^{\prime }=Z\cup \left\{ w\right\} $.
Since $Z\succ \left\{ N_{G_{e}}\left( u\right) -\left\{ w\right\} \right\} $ and $d_{G_{e}}\left( u\right) \geq 2$, there exists a vertex
$y\in N_{G_{e}}\left( u\right) \cap N_{G_{e}}\left( z_{i}\right) $ for some $i\geq 2$. Therefore for $f=yz_{i}$,
the set $Z\cup \left\{ u\right\} $ dominates $G-f$ and $G_f$,
which is a contradiction.

\end{proof}

The next corollary is an immediate consequence of Theorem \ref{ASR no bondages}.

\begin{corollary}
Every ASR-graph is $\gamma$-insensitive.
\end{corollary}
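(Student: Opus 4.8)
The plan is to read off the conclusion directly from Theorem \ref{ASR no bondages} together with the elementary bound in Remark \ref{obs1}, since unpacking the definition of $\gamma$-insensitivity reduces it to a statement about bondage edges. Recall that $G$ is $\gamma$-insensitive precisely when $\gamma(G-e)=\gamma(G)$ for every edge $e$, and that an edge $e$ is a bondage edge exactly when $\gamma(G-e)>\gamma(G)$. So the two notions are complementary on each edge: asserting that no edge is a bondage edge is equivalent to asserting $\gamma(G-e)\le\gamma(G)$ for every $e$.

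First I would let $G$ be an ASR-graph and fix an arbitrary edge $e$ of $G$. By Theorem \ref{ASR no bondages}, $G$ has no bondage edges, so in particular $e$ is not a bondage edge, which by definition means $\gamma(G-e)\le\gamma(G)$. Next I would invoke Remark \ref{obs1}, which supplies the matching lower bound $\gamma(G)\le\gamma(G-e)$ valid for any edge of any graph. Combining the two inequalities forces $\gamma(G-e)=\gamma(G)$.

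Since $e$ was an arbitrary edge, this equality holds for every edge of $G$, which is exactly the definition of a $\gamma$-insensitive graph, completing the argument. There is essentially no obstacle here: all the real work sits in Theorem \ref{ASR no bondages}, whose proof handles the delicate case analysis on where a hypothetical bondage edge can meet the $\gamma$-sets of an ASR-graph. The only thing worth flagging is that the step $\gamma(G-e)\le\gamma(G)$ is a restatement of ``not a bondage edge'' rather than an additional claim, so the corollary really is immediate once Theorem \ref{ASR no bondages} is in hand.
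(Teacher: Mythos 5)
Your proof is correct and matches the paper exactly: the paper states this corollary as an immediate consequence of Theorem \ref{ASR no bondages}, and your unpacking---no bondage edges gives $\gamma(G-e)\le\gamma(G)$, while Remark \ref{obs1} gives $\gamma(G)\le\gamma(G-e)$---is precisely the intended one-line argument.
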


\begin{lemma}
An ASR-graph has no leaves.
\end{lemma}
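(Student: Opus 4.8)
The plan is to assume, for contradiction, that an ASR-graph $G$ has a leaf $v$, and to show that the pendant edge $e=uv$ (where $u$ is the unique neighbour of $v$) must then be a bondage edge; this contradicts Theorem \ref{ASR no bondages}. The whole argument is driven by Teschner's characterization (Theorem \ref{teschner}): I would verify that for every $D\in\Gamma(G)$ one has $|e\cap D|=1$ and $\overline{e\cap D}\in EPN(e\cap D,D)$.

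The first condition is the easy part. To dominate the leaf $v$, whose closed neighbourhood is $N[v]=\{u,v\}$, every dominating set must contain $u$ or $v$. Since Lemma \ref{particion} forces every $\gamma$-set of $G$ to be independent, no $\gamma$-set can contain both ends of the edge $uv$; hence $|e\cap D|=1$ for every $D\in\Gamma(G)$.

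For the private-neighbour condition I would split according to which end lies in $D$. If $u\in D$ (so $\overline{e\cap D}=v$), then because $v$'s only neighbour is $u$ and $v\notin D$, no vertex of $D-\{u\}$ lies in $N[v]$, so $v\in EPN(u,D)$; this case is routine. The delicate case is $v\in D$ (so $\overline{e\cap D}=u$), where a priori $u$ could be dominated by some other member of $D$. Here the partition Lemma \ref{particion} is exactly what rescues the argument: the closed neighbourhoods of the vertices of $D$ are pairwise disjoint, and since $u$ already lies in the block $N[v]=\{u,v\}$, it lies in no block $N[d]$ with $d\in D-\{v\}$; thus $u\notin N[D-\{v\}]$ and $u\in EPN(v,D)$.

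With both conditions verified for every $\gamma$-set, Theorem \ref{teschner} makes $e$ a bondage edge, contradicting Theorem \ref{ASR no bondages}; hence $G$ has no leaf. I expect the second sub-case ($v\in D$) to be the main obstacle, and the key insight is that the ASR hypothesis, through Lemma \ref{particion}, prevents the support $u$ from being \emph{redundantly} dominated, which is precisely what stops $e=uv$ from escaping Teschner's Condition. (One may note that the $\gamma(G)=1$ situation is subsumed: the unique $\gamma$-set must meet $N[v]=\{u,v\}$, and order at least three rules out $v$ itself being universal, so it falls under the case $u\in D$.)
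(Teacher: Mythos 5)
Your proof is correct, but it resolves the crucial case by a genuinely different means than the paper. Both arguments proceed by contradiction from a pendant edge $e$, and both rest on Theorem \ref{teschner} together with Theorem \ref{ASR no bondages}. The paper, however, never shows that Teschner's Condition holds for \emph{every} $\gamma$-set: it splits on whether or not every $\gamma$-set meets $e$ in the support only. In the first case it concludes, as you do, that $e$ is a bondage edge, contradicting Theorem \ref{ASR no bondages}; in the second case, where some $\gamma$-set $D'$ contains the leaf, it swaps the leaf for the subdivision vertex $w$, obtaining a dominating set of $G_e$ of cardinality $\gamma(G)$, so that $\gamma(G_e)=\gamma(G)=\gamma(G-e)$ (the last equality because $e$ is not a bondage edge), which contradicts the ASR property directly. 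You close off that second case with a different tool, namely Lemma \ref{particion}, which the paper proves but does not use in this proof: the pairwise disjointness of the closed neighborhoods of the vertices of a $\gamma$-set shows that even when the leaf lies in $D$, the support is not dominated by $D$ minus the leaf, so Teschner's Condition survives and $e$ is a bondage edge outright. Your route is more uniform --- a single application of Teschner's theorem, with no construction inside $G_e$ and no mention of the subdivision vertex --- and it isolates the slightly stronger fact that in an ASR-graph any pendant edge would satisfy Teschner's Condition; what the paper's route buys is independence from Lemma \ref{particion}. Your closing parenthetical on $\gamma(G)=1$ is harmless but superfluous, since your general case analysis already covers it.
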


\begin{proof} Suppose that $e=uv$ is an edge of an ASR-graph $G$ such that
$d_G\left( u\right) =1$. By Theorem \ref{ASR no bondages} $e$ is not a
bondage edge.  If $D\cap e=\{ v\}$ for any $D\in \Gamma (G)$, then $e$ satisfies Teschner's Condition and by Theorem \ref{teschner} the edge $e$ is a bondage edge, a contradiction. Therefore,  there exists
$D'\in \Gamma \left( G\right) $ such that $e\cap D'=\{ u\}$.
Hence $D=\left( D'-\left\{ u\right\} \right) \cup \left\{ w\right\}$, where $w$ is the new vertex in $G_e$, is a dominating set of $G_{e}$ such that $|D'|=|D|=\gamma (G-e)$
and this contradicts that  $G$ is an ASR-graph.
\end{proof}

\begin{corollary} There is no ASR-tree except $P_2$.
\end{corollary}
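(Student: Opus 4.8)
The plan is to read the corollary off directly from the immediately preceding lemma, which guarantees that an ASR-graph has no leaves (recall that Section~4 carries the standing convention $|V(G)|\geq 3$). I would pair this with the elementary fact that every tree on at least two vertices has at least two leaves; in particular, any tree $T$ with $|V(T)|\geq 3$ possesses a vertex of degree one.

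First I would observe that if $T$ is a tree with $|V(T)|\geq 3$, then $T$ has a leaf, and so by the preceding lemma $T$ cannot be an ASR-graph. This dismisses every tree of order at least three in one stroke, leaving only the trees of order two. The unique such tree is $P_2=K_2$, which the paper has already noted to be an ASR-graph; since the ASR-definition concerns only graphs of order at least two, $K_1$ need not be considered. Combining these observations yields that $P_2$ is the sole ASR-tree.

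I expect no genuine obstacle here, since the statement is an immediate consequence of the no-leaves lemma together with the structure of trees. The only point demanding care is the boundary case: the no-leaves lemma operates under the section-wide assumption $|V(G)|\geq 3$, so $P_2$ falls outside its scope and must be handled separately by hand. This is precisely the reason $P_2$ survives as the unique exception in the statement, and I would make sure the write-up flags this explicitly rather than applying the lemma to $P_2$.
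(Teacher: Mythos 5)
Your proposal is correct and is exactly the argument the paper intends: the corollary is stated without proof as an immediate consequence of the preceding no-leaves lemma, since every tree of order at least three has a leaf, while $P_2$ was already noted to be an ASR-graph. Your explicit handling of the boundary case ($P_2$ lying outside the scope of the section-wide assumption $|V(G)|\geq 3$) is a careful touch, but the route is the same.
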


Given a vertex-disjoint graphs $H_1,H_2, ..., H_m$, we denote by $E(H_1,H_2,...,H_m)$ the set of all possible edges between them, that is, the set of edges of the complete $m$-partite graph determined by $\left( V(H_1),V(H_2),...,V(H_m) \right)$.

\begin{definition}
Let $m\in \mathbb{N}$. We say that a graph $G$ belongs to the family of graphs $\mathcal{B}_{m}$ if there exist $m$ vertex-disjoint ASR-graphs
$G_{1},G_{2},...,G_{m}$ of order at least three and domination number one, such that
\begin{enumerate}
\item $V(G)=\bigcup \limits_{i=1}^{m} V(G_i)$.
\item $E(G) =\bigcup \limits_{i=1}^{m}E(G_i)\cup \widetilde{E}(G)$.
\end{enumerate}
Where
\begin{itemize}
\item For $1\leq i\leq m$, $G_i=K_{r_i}+H_i$, where $r_i$ is the number of universal vertices in $G_i$.
\item For $1\leq i\leq m$, $S_{i}$ is a subset of $V(H_{i})$ such that $N_{H_{i}}\left[ S_{i}\right] \neq V\left( H_{i}\right)$.
\item $\widetilde{E}(G)\subseteq E(H_1[S_1],H_2[S_2],...,H_m[S_m])$.
\end{itemize}
\end{definition}

\begin{proposition}
Let $m\in \mathbb{N}$. Any graph in $\mathcal{B}%
_{m} $ is an ASR-graph with domination number $m$.
\end{proposition}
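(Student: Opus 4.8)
The plan is to show, for every edge $e$ of a graph $G\in\mathcal{B}_m$, that $\gamma(G)=\gamma(G-e)=m$ while $\gamma(G_e)=m+1$; since $m\neq m+1$ this makes $G$ an ASR-graph of domination number $m$. Throughout I would use that, by Theorem \ref{ASRg=1}, each block satisfies $G_i=K_{r_i}+H_i$ with $r_i\geq 3$ universal vertices, and that the condition $N_{H_i}[S_i]\neq V(H_i)$ lets me fix in each block a \emph{witness} $y_i\in V(H_i)\setminus N_{H_i}[S_i]$. The key property of a witness is that $y_i\notin S_i$, so $y_i$ meets no edge of $\widetilde{E}(G)$ and therefore $N_G[y_i]\subseteq V(G_i)$; moreover no vertex of any $S_j$ dominates $y_i$ (inside $G_i$ this would put $y_i$ into $N_{H_i}[S_i]$, and across blocks there is no edge to $y_i$ at all).

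First I would prove $\gamma(G)=m$. Picking one universal vertex $u_i$ in each block gives a set $\{u_1,\dots,u_m\}$ dominating $G$ (each $u_i$ already dominates $V(G_i)$, and the edges of $\widetilde{E}(G)$ only add adjacencies), so $\gamma(G)\leq m$; conversely the $m$ witnesses lie in pairwise disjoint blocks and each must be dominated from within its own block, forcing $|D|\geq m$ for every dominating set $D$. Next, $\gamma(G-e)=m$: deleting an edge of a block $G_i$ destroys the universality of at most two of its $r_i\geq 3$ universal vertices, so a universal vertex of $G_i$ survives in $G-e$, and deleting an edge of $\widetilde{E}(G)$ leaves every block intact; in either case a set of one surviving universal vertex per block dominates $G-e$, whence $\gamma(G-e)=m$ by Remark \ref{obs1}.

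The crux is $\gamma(G_e)=m+1$, and this is where I expect the real work. Suppose for contradiction that $D$ dominates $G_e$ with $|D|=m$, and let $w$ be the subdivision vertex. Subdividing $e$ leaves untouched every block not containing $e$ (all blocks, if $e\in\widetilde{E}(G)$), so each untouched witness still forces a vertex of $D$ into its block; since $\{V(G_k)\}_k\cup\{w\}$ partitions $V(G_e)$ and $|D|=m$, this pins down $|D\cap V(G_k)|=1$ for every $k$ and $w\notin D$. Because $w$ is adjacent only to the two ends of $e$, domination of $w$ forces one endpoint $x$ into $D$, and $x$ is then the unique member of $D$ in its block, so $x$ alone must dominate that block's witness and must dominate $w$.

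Finally I would derive a contradiction by locating $x$. If $e$ lies between blocks (in $\widetilde{E}(G)$), then $x\in S_i$, yet $x$ cannot dominate $y_i$, since that would place $y_i\in N_{H_i}[S_i]$. If $e$ lies inside a block, I would split on whether its ends are universal or lie in $H_i$; the common mechanism is that subdivision severs $x$ from the other endpoint $y$, while a surviving universal vertex (there is one because $r_i\geq 3$) and the witnesses can only be dominated from inside their blocks. In each configuration some vertex, namely $y$, an untouched universal vertex, or a witness, is left undominated; the only escape, dominating $y$ from another block through a vertex $z\in S_j\cap D$, fails because $z$ is then the unique member of $D$ in block $j$ and cannot dominate $y_j$. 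This contradicts $|D|=m$, so $\gamma(G_e)=m+1$ and the proof concludes. The main obstacle is precisely this last case analysis, keeping careful track of which vertices can and cannot receive domination from outside their block once $e$ is subdivided.
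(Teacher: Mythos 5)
Your overall strategy matches the paper's: establish $\gamma(G)=\gamma(G-e)=m$ via one universal vertex per block, then show every dominating set $D$ of $G_e$ has size $m+1$ by pinning one vertex of $D$ into each block with $w\notin D$, and derive the final contradiction from the fact that a vertex of $S_j$ which is the \emph{unique} $D$-vertex of its block cannot dominate that block's witness (this is precisely the paper's Remark \ref{Dom2inGi}). However, there is a concrete gap in your pinning-down step. When $e$ lies inside a block $G_i$, the justification you give --- untouched witnesses force one $D$-vertex into each untouched block, plus counting against $|D|=m$ --- does not yield the claimed conclusion that $|D\cap V(G_k)|=1$ for every $k$ and $w\notin D$. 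The witnesses only force $m-1$ vertices, one into each block $G_k$ with $k\neq i$; the single remaining vertex of $D$ could a priori be the subdivision vertex $w$ itself, with $D\cap V(G_i)=\emptyset$. The witness of $G_i$ cannot rescue you here, because $e$ may be incident to that witness, in which case $w\in D$ dominates it. (The other leftover possibility, a second $D$-vertex in an untouched block with $D\cap V(G_i)=\emptyset$ and $w\notin D$, dies at once since then $w$ is undominated; but the $w\in D$ configuration does not, and none of your subsequent steps addresses it, as they all presuppose $w\notin D$.)

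To exclude that configuration you need the observation that $D$ must meet $V(G_i)$ as well: since $r_i\geq 3$, some universal vertex $z$ of $G_i$ is not an endpoint of $e$, and then $N_{G_e}[z]=V(G_i)$, so $z$ can only be dominated from inside its own block. This is exactly the paper's Remark \ref{DominGi}, stated up front so that the case $w\in D$ is dispatched immediately ($|D|\geq m+1$, hence $|D|=m+1$ by Remark \ref{obs1}). You do state this ingredient --- ``a surviving universal vertex \dots can only be dominated from inside their blocks'' --- but you deploy it only in the final case analysis, after the point where it is actually needed. Once this is repaired the rest of your argument goes through: your direct configuration analysis for internal edges is a workable substitute for the paper's slicker step, which instead uses that $G_j$ is an ASR-graph with domination number one to conclude that the single vertex of $D\cap V(G_j)$ cannot dominate $(G_j)_e$, and then applies Remark \ref{Dom2inGi} to the outside vertex that would have to help.
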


\begin{proof}
Let $G\in \mathcal{B}_{m}$. By Theorem \ref{ASRg=1}, $r_{i}\geq 3$ for $1\leq i\leq m$ and by definition of $G$, we have
the following remarks.

\begin{remark}\label{DominGi}
Let $e\in E(G)$. If $D$ is a dominating set of $G_{e}$, then $D\cap V(G_i) \neq \emptyset$ for $1\leq i\leq m$.
\end{remark}

\begin{remark}\label{Dom2inGi}
Let $e\in E(G)$, $D$ be a dominating set of $G_{e}$ and $w$ be the new vertex in $G_e$. If $D\cap S_j\neq \emptyset $ for some $j$ and $w\notin D$, then $\left\vert D\cap V(G_j)\right\vert > 1$.
\end{remark}

By Theorem \ref{ASRg=1}, we only need to prove the result for $%
m\geq 2$.

It is clear that $D=\{ x_1,x_2, ...,x_m\}$, where $x_i\in V(K_{r_i})$ is a $\gamma $-set of $G$. Note that for any $e\in E(G)$ the set $D=\{ x_1,x_2,...,x_m\}$, where $x_i$ is an universal vertex of $G_i$ and $D\cap e=\emptyset$, is a dominating set of $G-e$. Thus $\gamma (G-e)=\gamma (G)=m$.

Let $e=uv$ be an edge of $G$, $w$ be the new vertex in $G_{e}$ and $D$ be a $\gamma $-set of $G_{e}$. If $w\in D$, then by Remarks \ref{DominGi} and \ref{obs1}, $\left\vert D\right\vert =m+1$. Otherwise we may assume that $u\in D\cap V(G_{j})$ for some $j$. Moreover, if $\left\vert D\cap V(G_i)\right\vert > 1$ for some i, by Remarks \ref{DominGi} and \ref{obs1} we have $\gamma\left( G_{e}\right) =m+1$ and we are done. So, we may suppose $\left\vert D\cap V(G_i)\right\vert = 1$ for $1\leq i\leq m$.

If $e\in \widetilde{E}(G)$, then $u \in S_j$ and by Remark \ref{Dom2inGi} we have $\left\vert D\cap V(G_j)\right\vert > 1$, a contradiction. Otherwise $e\in E(G_{j})$. Since $G_j$ is an ASR-graph, $u$ is not a dominating set of $\left( G_{j}\right) _{e}$, but $\left\vert D\cap V(G_j)\right\vert = 1$. Therefore there exist $x\in S_j$ and $y\in S_k$ for some $k \neq j$ such that $y \in D$ and $y \succ x$. Again, by Remark \ref{Dom2inGi} $\left\vert D\cap V(G_k)\right\vert > 1$, which is impossible.
\end{proof}

\textbf{Acknowledgements}

The authors thank the financial support received from Grant UNAM-PAPIIT IN-117812 and SEP-CONACyT.

\end{document}